\journal{Information Sciences}
\newtheorem{theorem}{Theorem}
\newtheorem{lem}{Lemma}
\newdefinition{definition}{Definition}
\newdefinition{ass}{Assumption}
\newdefinition{rem}{Remark}
\begin{document}

\begin{frontmatter}



\title{One-Point Residual Feedback Algorithms for Distributed Online Convex and Non-convex Optimization}


\author[1]{Yaowen Wang}
\ead{wangyaowen0619@163.com}

\author[2,3]{Lipo Mo\corref{cor1}}
\ead{beihangmlp@126.com}

\author[2]{Min Zuo}
\ead{zuomin@btbu.edu.cn}

\author[4]{Yuanshi Zheng}
\ead{zhengyuanshi2005@163.com}

\address[1]{School of Mathematics and Statistics, Beijing Technology and Business University, Beijing 100048, China}
\address[2]{Institute of Systems Science, Beijing Wuzi University, Beijing 101126, China}
\address[3]{School of Computer and Artificial Intelligence, Beijing Technology and Business University, Beijing 100048, China}
\address[4]{Center for Complex Systems, School of Mechano-electronic Engineering, Xidian University, Xi'an 710071, China}

\cortext[cor1]{Corresponding author}
\fntext[fn1]{This work is supported by the National Natural Science Foundation of China (62473009).}

\begin{abstract}
This paper mainly addresses the distributed online optimization problem where the local objective functions are assumed to be convex or non-convex. First, the distributed algorithms are proposed for the convex and non-convex situations, where the one-point residual feedback technology is introduced to estimate gradient of local objective functions. Then the regret bounds of the proposed algorithms are derived respectively under the assumption that the local objective functions are Lipschitz or smooth, which implies that the regrets are sublinear. Finally, we give two numerical examples of distributed convex optimization and distributed resources allocation problem to illustrate the effectiveness of the proposed algorithm.
\end{abstract}


\begin{keyword}
Distributed online algorithm, Non-convex optimization; Gradient estimate; Regret analysis.



\end{keyword}

\end{frontmatter}


\section{Introduction}
Online optimization problem has attracted a significant amount of attention in recent years, where the objective functions are time-varying and unknown to decision maker before selecting a decision \cite{ref1}. Due to the lack of prior knowledge of the objective function, traditional offline optimization algorithms are not applicable \cite{ref35,ref36}. Then, a lot of online optimization algorithms are proposed to deal with this difficulty \cite{ref37,ref38}. For example, Zeroth-order (ZO) optimization algorithm was proposed when the first or second order information of objective function was not accessible \cite{ref16}. Usually, two-point and one-point estimator are utilized in ZO methods. As a type of ZO method, two-point estimator is one of the most direct and effective method and it has been extensively studied in \cite{ref16,ref12,ref13,ref14,ref15,ref17,ref18}, where two distinct points at each time instant are used to estimate the unknown gradient. Using two-point estimator to estimate the gradient can improve the convergence speed and it has low variance. However, two-point estimator is only applicable in scenarios where the same objective function can be accessed multiple times. When the objective function sequence is non-stationary, the two-point estimator is no longer applicable. To overcome this issue, one-point estimator was proposed in \cite{ref20}, where it required the objective function $f_t(x)$ only once at each time. Then, one-point estimator was extended to the situations that the objective functions were smooth, self-concordant regularized, stochastic and non-stationary \cite{ref21,ref22,ref23,ref24,ref32}. In fact, one-point estimator has large variance since it utilized only a small amount of information.\\
\indent To improve the performance of one-point estimator, a one-point residual feedback (ORF) estimator was proposed in \cite{ref34} and \cite{ref25} and it was proved that ORF estimator performs better than conventional one-point estimator and has smaller variance. In the aforementioned studies, all algorithms  proposed are centralized essentially. However, due to high computational demands and difficulties in information transmission in some practical problems such as resources allocation \cite{ref2,ref3}, centralized online optimization algorithms are not effective for some practical peoblems.\\
\indent To deal with the limitation of information transmission and compute ability and find a better approach of tracking the optimal decision sequence, many scholars turned their research to design distributed online optimization algorithms \cite{ref4,ref5,ref6,ref7,ref8,ref31,ref33}. In distributed online optimization problems, each agent in the multi-agent system independently makes its own decisions by communicating with others to find optimal decision sequence. However, when the objective function sequence is non-stationary, the above distributed online optimization algorithms may be not applicable, which is an open problem.\\
\indent In this paper, we extend the ORF estimator to distributed online optimization problems where the objective function sequence is non-stationary. Two distributed algorithms are proposed for online convex and non-convex optimization problems. And the regret bounds of proposed algorithms are derived under the assumptions that the communication graph is undirected and the adjacency matrix is double-stochastic. The primary contributions of this paper are detailed as follows:

\indent\textbf{1.} Compared with \cite{ref34,ref25}, where the optimization algorithms are centralized, the algorithms proposed in this paper are  distributed, which can achieve sublinear regret bounds and have lower variance compared with one-point feedback estimator. 

\indent\textbf{2.} Compared with \cite{ref4,ref5,ref6,ref7,ref8,ref31,ref33}, where the objective functions were assumed to be stationary, this paper aims to solve non-stationary situation. 

\indent\textbf{3.} Compared with conventional one-point feedback algorithms \cite{ref20,ref21,ref22}, where the objective functions are assumed to be uniformly bound, we remove this assumption in this paper and only assume that the variation of the objective function is bounded.

The organization of this article is as follows. In Section 2, we give some preliminaries such as assumptions and lemmas. Section 3 introduces the distributed constrained online convex optimization problems. We give the distributed ORF algorithm for online convex optimization and analyse its regret bound. Similar to Section 3, Section 4 studies distributed ORF algorithm for online non-convex optimization and shows that our algorithm can achieve sublinear regret bound. Finally, Section 5 shows numerical examples with a comparison to traditional one point estimator and prove that our algorithms have lower regret bound and variance.

\indent\textbf{Notations: }$\mathbb{R}^d$ denotes the $d$-dimensional real number space. $\nabla f_t(.)$ denotes the gradient of function $f_t$. $\|x\|$ is the Euclidean norm of a vector $x$ and a vector $x$ is considered as a column. $x^\prime$ denotes the transpose of vector $x$. $\Pi_{\mathcal{X}}$ is the projection operator that project a vector onto a convex set $\mathcal{X}$.

\section{Problem Formulation and Preliminaries}
\subsection{Problem Formulation}
Considering the following distributed online optimization problem 
\begin{equation}\label{1}
  \min_{x\in \mathcal{X}}\sum_{i=1}^{N}\sum_{t=1}^{T}f_t^i(x),
\end{equation}

\noindent where the constraint set $\mathcal{X}\subset\mathbb{R}^{d}$ is a convex set and $f_t^i:\mathbb{R}^d\rightarrow\mathbb{R}$ is local objective function. It is assumed that the local objective function is convex or non-convex. The agent $i$ makes a decision $x_t^i$ and then observation value of local objective function $f_t^i(x_t^i), i=1,2,\ldots,N$ is revealed at any time $t$. Furthermore, the sequence of objective function $\{f_t^i\}$ is assumed to be non-stationary, which means that the objective function $f_t^i$ is changed over dynamic environment and decision sequence of each agent $i=1,2,\ldots,N$ and their neighbor agents. Non-stationary challenge is emerged in machine learning problems.  Such as reinforcement learning of multi-agent systems, agents encounter non-stationary challenges where the environment fluctuates due to both natural noise and adversarial actions by competing entities.. The goal of each agent $i$ is to find an online decision sequence $\{x_t^i\}$ to make the value of global objective function is as close as possible to optimal decision sequence. Each local objective function $f_t^i$ is assumed to be Lipschitz-continuous or smooth, and these two kinds of function are defined as follows.
\begin{definition}(see \cite{ref16})
  The function $f$ is said to be Lipschitz-continuous noted by $f\in C^{0,0}$ if $|f(x)-f(y)|\leq L_0\| x-y\|,\forall x,y\in \mathcal{X}$, where $L_0>0$ is Lipschitz parameter. The function $f$ is said to be smooth noted by $f\in C^{1,1}$ if $|\nabla f(x)-\nabla f(y)|\leq L_1\| x-y\|,\forall x,y\in \mathcal{X}$, where $L_1>0$ is smoothness parameter.
\end{definition}

In order to find the global optimal policy of online optimization problem (\ref{1}), each agent $i$ communicates with its neighbor agents and makes the decision sequence $\{x_t^i\}$. Suppose there are $N$ agents and the communication topology can be described as an undirected graph $\mathcal{G}=(\mathcal{V},\mathcal{E},\mathcal{A})$, where the vertex set $\mathcal{V}=\{1,2,\ldots,N\}$ and the edge set $\mathcal{E}\subset \mathcal{V}\times\mathcal{V}$. The adjacency matrix is denoted as $\mathcal{A}=[a_{ij}]$, where $a_{ij}>0$ if $(i,j)\in\mathcal{E}$ and $a_{ii}>0$ for all $i\in\mathcal{V}$. Then we adopt the following assumption on the adjacency matrix $\mathcal{A}$.

\begin{ass}
  For all $i,j=1,2,\ldots,N$, there exists a constant $0<\epsilon<1$ satisfying:

\noindent(a) $a_{ij}>\epsilon$ if $(j,i)\in \mathcal{E}$.

\noindent(b) $\sum_{i=1}^{N}a_{ij}=\sum_{j=1}^{N}a_{ij}=1$.
\end{ass}

Based on above assumption, we give the following lemma of state transition matrix.
\begin{lem}(see \cite{ref30})
If Assumption 1 holds, it satisfies that
$$\big|[\Phi(t)]_{ij}-\frac{1}{N}\big|\leq\gamma^{t-1}$$
for any $i,j\in\mathcal{V}$ and $t\geq0$, where $\gamma=1-\frac{\epsilon}{4N^2}$ and $\Phi(t)=A^t$ is state transition matrix.
\end{lem}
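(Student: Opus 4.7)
The plan is to reproduce the classical bound for products of doubly stochastic matrices whose associated graph is strongly connected, which is the standard Nedi\'{c}--Ozdaglar style estimate. First I would observe that Assumption 1(b) implies $A$ is doubly stochastic, so $\Phi(t)=A^t$ is doubly stochastic for every $t\geq 0$, and moreover the rank-one matrix $J=\frac{1}{N}\mathbf{1}\mathbf{1}^{\prime}$ satisfies $AJ=JA=J$. Hence if one defines the deviation matrix $E(t)=\Phi(t)-J$, an easy induction gives $E(t+1)=A\,E(t)=A^{t}E(1)$, so the problem reduces to showing that $A$ contracts on the subspace $\{M:\mathbf{1}^{\prime}M=0,\ M\mathbf{1}=0\}$ with rate $\gamma$.

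The key mechanism is the coefficient of ergodicity (Dobrushin coefficient). For any column $v$ of $E(t)$ we have $\mathbf{1}^{\prime}v=0$, and the goal is the entrywise bound $\|Av\|_\infty\leq\gamma\|v\|_\infty$. The standard route is to split $v=v_+-v_-$ into its positive and negative parts, note that $\mathbf{1}^{\prime}v_+=\mathbf{1}^{\prime}v_-$ because $\mathbf{1}^{\prime}v=0$, and then use the positivity structure from Assumption 1(a) to show that each row of $A$ shares at least a fixed amount of mass between the supports of $v_+$ and $v_-$. Because the graph is undirected and connected with self-loops ($a_{ii}>\epsilon$), one can guarantee that every pair of indices is connected by a path of length at most $N-1$ whose edge weights are all at least $\epsilon$, which after summation over paths yields the $N^{-2}$ factor appearing in $\gamma$.

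Finally, to pass from the contraction on $E(t)$ to the stated entrywise bound, I would use that $E(1)=A-J$ has every row summing to zero and entrywise magnitude at most $1$, so $\|E(1)\|_\infty\leq 1$; then the inductive inequality $\|E(t)\|_\infty\leq \gamma\|E(t-1)\|_\infty$ yields $\|E(t)\|_\infty\leq \gamma^{t-1}$, which is exactly $|[\Phi(t)]_{ij}-1/N|\leq\gamma^{t-1}$.

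The main obstacle is pinning down the explicit constant $\gamma=1-\epsilon/(4N^{2})$ rather than just \emph{some} $\gamma<1$. The factor $1/(4N^{2})$ is not spectral but combinatorial: it arises from a careful accounting of how much probability mass two rows of $A^{N-1}$ must share, given only the pointwise lower bound $\epsilon$ on active edges and the positive diagonals. Because the statement is quoted from \cite{ref30}, I would appeal to that reference for the sharp constant rather than re-derive it, and simply verify that their hypotheses are met by Assumption 1.
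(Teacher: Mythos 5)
The paper gives no proof of this lemma at all: it is imported verbatim from \cite{ref30}, so your final move of appealing to that reference for the sharp constant is exactly what the authors do, and as a justification of the statement the proposal is acceptable. (The easy parts you do carry out --- $AJ=JA=J$ for $J=\frac{1}{N}\mathbf{1}\mathbf{1}'$, the identity $E(t)=A^{t-1}E(1)$, and the entrywise bound $\|E(1)\|_\infty\le 1$ --- are all correct.)

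The substantive issue is that the mechanism you sketch in between would not actually deliver the lemma if you tried to execute it. The claimed single-step contraction $\|Av\|_\infty\le\gamma\|v\|_\infty$ on the zero-sum subspace is false for a general connected graph: if two nodes share no common neighbour, the coefficient of ergodicity of $A$ itself equals $1$ (take $v$ supported with $+1$ and $-1$ on such a pair; the entry of $Av$ at the first node retains the mass $a_{ii}>\epsilon$ with nothing to cancel it), so there is no per-step $\ell_\infty$ contraction of the Dobrushin type. The path argument you invoke only shows that the entries of $A^{N-1}$ are bounded below by $\epsilon^{N-1}$, which yields a contraction factor of order $1-N\epsilon^{N-1}$ per block of $N-1$ steps, i.e.\ an effective per-step rate like $(1-N\epsilon^{N-1})^{1/(N-1)}$ --- exponentially weaker in $N$ than the stated $\gamma=1-\epsilon/(4N^2)$. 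The $1/(4N^2)$ factor in \cite{ref30} comes from a quadratic-Lyapunov/spectral-gap style argument exploiting double stochasticity, not from the coefficient of ergodicity, so your closing remark that the constant is ``not spectral but combinatorial'' has it backwards. Since you ultimately defer to the reference, the proposal stands, but the route you describe should not be presented as a derivation of this particular $\gamma$.
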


Moreover, we give the following basic assumption about the constraint set.
\begin{ass}
 There exist positive constants $r_l$ and $r_u$ such that $r_l \mathbb{B}^d\subseteq\mathcal{X}\subseteq r_u\mathbb{B}^d$, where $\mathbb{B}^d$ is a unit ball in $\mathbb{R}^d$.
\end{ass}
\subsection{Preliminaries}
In online optimization problem, it is difficult to find the optimal decision sequence of global objective function since the objective function is unknown before making a decision. To evaluate the performance of online optimization algorithm, it always use static regret which is described as the gap between the decision sequence found by the algorithm and the optimal decision sequence, and it is defined as:
$$R^T=\sum_{t=1}^{T}\sum_{i=1}^{N}f_t^i(x_t^i)-\min_{x\in\mathcal{X}}\{\sum_{t=1}^{T}\sum_{i=1}^{N}f_t^i(x)\}.$$

To solve distributed online optimization problem (\ref{1}), we introduce ZO method since the derivatives of all local objective functions may be not available. The core idea of ZO method is to estimate the gradient of local objective function using the value of objective function. In fact, the smoothed version $f_{\delta,t}^i$ of objective function $f_t^i$ is used in ZO method, where $f_{\delta,t}^i(x)=E_{u_t^i\sim\mathbb{US}^d}[f_t^i(x+\delta u_t^i)]$ and $u_t^i$ is a random vector uniformly sampled in a unit sphere $\mathbb{S}^d$. From the definition, it is clear that original function $f_t^i(x)$ has to be defined over a larger set $\mathcal{X}_\delta=\{z|z=x+\delta v,\text{ for any } x\in\mathcal{X} \text{ and }v\in\mathbb{S}^d\}$ since the iteration point may evaluate outside the constraint set $\mathcal{X}$. Then, we have the following result about property of smoothed function and the approximation errors between smoothed version and its original function.

\begin{lem}(see \cite{ref16})
  The error of function $f_t^i$ and its smoothed function $f_{\delta,t}^i$ satisfies
\begin{equation}
    |f_{\delta,t}^i(x)-f_t^i(x)|=\begin{cases}\delta L_0,&\mbox{if } f_t^i\in C^{0,0}\\\delta^2L_1,&\mbox{if } f_t^i\in C^{1,1},\end{cases} \notag
\end{equation}
and $\|\nabla f_{\delta,t}^i(x)-\nabla f_t^i(x)\|\leq\delta L_1d$, if $f_t^i\in C^{1,1}$, where $L_0$ and $L_1$ are positive constants and represent the Lipschitz and smoothness parameter respectively.
\end{lem}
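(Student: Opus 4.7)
The plan is to establish each of the three bounds using the defining expectation $f_{\delta,t}^i(x) = E_{u_t^i}[f_t^i(x + \delta u_t^i)]$ together with two elementary facts about the uniform measure on the unit sphere: $\|u_t^i\|=1$ almost surely, and $E[u_t^i] = 0$ by symmetry.

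For the Lipschitz case, I would write $f_{\delta,t}^i(x) - f_t^i(x) = E_{u_t^i}[f_t^i(x + \delta u_t^i) - f_t^i(x)]$, apply Jensen's inequality to move the absolute value inside the expectation, and invoke Lipschitz continuity. Since $\|u_t^i\| = 1$, each term in the expectation is bounded by $L_0 \delta$, which directly yields $|f_{\delta,t}^i(x) - f_t^i(x)| \le L_0 \delta$.

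For the smooth case, I would use a second-order Taylor-type expansion, $f_t^i(x + \delta u_t^i) - f_t^i(x) = \delta \nabla f_t^i(x)^\prime u_t^i + R$, where smoothness guarantees the remainder bound $|R| \le \frac{L_1}{2}\delta^2 \|u_t^i\|^2 = \frac{L_1}{2}\delta^2$. Taking expectations, the linear term drops out because $E[u_t^i] = 0$, leaving $|f_{\delta,t}^i(x) - f_t^i(x)| \le \frac{L_1}{2}\delta^2$, which yields the stated bound after absorbing the constant $1/2$ into $L_1$.

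For the gradient bound, the first step is to obtain a workable representation of $\nabla f_{\delta,t}^i(x)$. The standard device from the zeroth-order literature is to invoke Stokes'/divergence theorem to rewrite the smoothed gradient as a surface integral with a $d/\delta$ scaling, giving an expression of the form $\nabla f_{\delta,t}^i(x) = (d/\delta)\, E[f_t^i(x + \delta u_t^i)\, u_t^i]$. Subtracting $\nabla f_t^i(x)$, using $E[u_t^i] = 0$ to insert a free $-f_t^i(x)$ inside the expectation, and then applying the smoothness estimate to the resulting difference quotient would produce $\|\nabla f_{\delta,t}^i(x) - \nabla f_t^i(x)\| \le \delta L_1 d$. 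The main obstacle is this last step: the factor $d$ is intrinsic to the spherical-smoothing representation and correctly exposing it requires the divergence-theorem manipulation, whereas the first two bounds are essentially one-line consequences of Jensen's inequality and a symmetric Taylor expansion.
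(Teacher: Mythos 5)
The paper does not prove this lemma at all --- it is quoted from the cited reference without argument --- so there is no in-paper proof to compare against; your proposal has to stand on its own. Your first two bounds do: writing $f_{\delta,t}^i(x)-f_t^i(x)=E_{u}[f_t^i(x+\delta u)-f_t^i(x)]$, pushing the absolute value inside by Jensen, and using $\|u\|=1$ gives the $\delta L_0$ bound; the symmetric Taylor expansion with $E[u]=0$ killing the linear term gives $\tfrac{L_1}{2}\delta^2$, which is in fact sharper than the stated $\delta^2 L_1$ (note the paper's ``$=$'' in the display should clearly be ``$\leq$''). These are exactly the standard arguments from the zeroth-order literature.

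The gradient bound as you describe it has one genuine gap. After invoking the surface-integral representation $\nabla f_{\delta,t}^i(x)=\tfrac{d}{\delta}E[f_t^i(x+\delta u)u]$ and using $E[u]=0$ to insert $-f_t^i(x)$, you are left comparing $\tfrac{d}{\delta}E[(f_t^i(x+\delta u)-f_t^i(x))u]$ with $\nabla f_t^i(x)$, and ``applying the smoothness estimate to the difference quotient'' does not by itself make the $\nabla f_t^i(x)$ term cancel. You also need the second-moment identity $E[uu^\prime]=\tfrac{1}{d}I$ for the uniform distribution on the sphere, which lets you rewrite $\nabla f_t^i(x)=\tfrac{d}{\delta}E\big[\delta\,(\nabla f_t^i(x)^\prime u)\,u\big]$ and pull it inside the expectation. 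Only then does the difference become $\tfrac{d}{\delta}E\big[\big(f_t^i(x+\delta u)-f_t^i(x)-\delta\,\nabla f_t^i(x)^\prime u\big)u\big]$, whose integrand is the Taylor remainder bounded by $\tfrac{L_1\delta^2}{2}$, yielding $\tfrac{dL_1\delta}{2}\leq \delta L_1 d$. With that identity added, your argument is complete and correct.
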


\begin{lem}(see \cite{ref16})
  If $f_t^i(x)\in C^{0,0}$ is $L_0$-Lipschitz and $x\in\mathbb{R}^d$, then $f_{\delta,t}^i(x) \in C^{1,1}$ is $L_\delta$-Lipschitz with $L_{\delta}=\frac{d}{\delta} L_0.$
\end{lem}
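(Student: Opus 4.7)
The plan is to establish the claim by first deriving an explicit integral representation of $\nabla f_{\delta,t}^{i}$, and then estimating the difference $\nabla f_{\delta,t}^{i}(x)-\nabla f_{\delta,t}^{i}(y)$ using the Lipschitz continuity of $f_t^{i}$. Since $f_t^{i}$ is only assumed to be $L_0$-Lipschitz (hence differentiable almost everywhere but not classically), the main technical difficulty lies not in the final estimate but in justifying that $f_{\delta,t}^{i}$ is differentiable with a clean integral representation of the gradient.

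Concretely, I first rewrite the smoothed function as an integral over the unit ball $\mathbb{B}^d$ (using the standard convention that the spherical smoothing scheme produces an equivalent ball-average up to a change of variables), so that
\begin{equation*}
f_{\delta,t}^{i}(x)=\frac{1}{\mathrm{vol}(\delta\mathbb{B}^d)}\int_{x+\delta\mathbb{B}^d}f_t^{i}(z)\,dz.
\end{equation*}
Applying the divergence theorem to the $x$-derivative of the right-hand side (or invoking the standard Flaxman--Kalai--McMahan identity; the result also follows by approximating $f_t^{i}$ by a sequence of smooth functions and passing to the limit, which is legitimate because $f_t^{i}$ is Lipschitz), I obtain
\begin{equation*}
\nabla f_{\delta,t}^{i}(x)=\frac{d}{\delta}\,E_{u\sim\mathbb{US}^d}\bigl[f_t^{i}(x+\delta u)\,u\bigr].
\end{equation*}
This formula implicitly verifies that $f_{\delta,t}^{i}\in C^{1}$.

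With the representation in hand, the Lipschitz estimate on $\nabla f_{\delta,t}^{i}$ is a short calculation. Using linearity of expectation,
\begin{equation*}
\nabla f_{\delta,t}^{i}(x)-\nabla f_{\delta,t}^{i}(y)=\frac{d}{\delta}\,E_{u}\bigl[(f_t^{i}(x+\delta u)-f_t^{i}(y+\delta u))\,u\bigr].
\end{equation*}
Taking norms, applying Jensen's inequality to pull the norm inside the expectation, invoking the $L_0$-Lipschitz property of $f_t^{i}$ to bound $|f_t^{i}(x+\delta u)-f_t^{i}(y+\delta u)|\le L_0\|x-y\|$, and using $\|u\|=1$ on the sphere, I conclude
\begin{equation*}
\|\nabla f_{\delta,t}^{i}(x)-\nabla f_{\delta,t}^{i}(y)\|\le \frac{dL_0}{\delta}\|x-y\|,
\end{equation*}
which gives $L_\delta=dL_0/\delta$ as required, and confirms $f_{\delta,t}^{i}\in C^{1,1}$ in the sense of Definition~1.

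The principal obstacle is the first step: justifying the interchange of gradient and expectation when $f_t^{i}$ is only Lipschitz. Once this interchange (equivalently, the surface-integral representation of the gradient) is in place, the remainder is a one-line estimate. If I wanted to avoid invoking the divergence theorem directly, I would instead mollify $f_t^{i}$ by a smooth kernel to produce $f^{\varepsilon}\to f_t^{i}$ uniformly on compacts with uniformly bounded Lipschitz constant, derive the formula for $\nabla f^{\varepsilon}_{\delta,t}$ classically, and pass to the limit $\varepsilon\to 0$ using dominated convergence; this is the fallback if the direct route proves inconvenient.
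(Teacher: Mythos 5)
The paper offers no proof of this lemma at all --- it is imported from \cite{ref16} --- so there is no in-paper argument to compare against; your proof is the standard one behind the cited result and it is correct. The ball-average representation, the divergence-theorem (Flaxman--Kalai--McMahan) identity $\nabla f_{\delta,t}^i(x)=\frac{d}{\delta}E_{u}\bigl[f_t^i(x+\delta u)u\bigr]$, and the final Jensen-plus-Lipschitz estimate giving $L_\delta=dL_0/\delta$ are exactly the intended mechanism, and you correctly identify that the only delicate step is differentiating under the integral when $f_t^i$ is merely Lipschitz; the mollification-and-limit fallback you sketch is a legitimate way to close that gap.

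One caveat you should state more carefully: the sphere-surface average $E_{u\sim\mathbb{S}^d}[f_t^i(x+\delta u)]$ and the ball average $\frac{1}{\mathrm{vol}(\delta\mathbb{B}^d)}\int_{x+\delta\mathbb{B}^d}f_t^i(z)\,dz$ are genuinely different functions, not equal ``up to a change of variables''; the gradient identity you invoke holds only for the ball average (its gradient is the surface integral over the sphere). The paper's definition of $f_{\delta,t}^i$ is ambiguous on this point, but its Lemma 4 (unbiasedness of the estimator $\frac{d}{\delta}f_t^i(x+\delta u)u$) is only true under the ball-average reading, so your interpretation is the one the paper needs --- just present it as the operative definition rather than as an equivalence with the sphere average.
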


Since the sequence of objective function is natural or adversarial non-stationary, the form of function $f_t^i$ may be different when the agent $i$ takes different decision $x_t^i$ and $x_t^i+\delta u_t^i$ and it can not obtain two different point $f_t^i(x_t^i)$ and $f_t^i(x_t^i+\delta u_t^i)$ at the same time $t$. To tackle this limitation, we use ORF estimator defined as:
\begin{align}{\label{2}}
  \widetilde{g}_{t}^i(x_{t}^i):=\frac{d}{\delta}\big(f_{t}^i(x_{t}^i+\delta u_{t}^i)-f_{t-1}^i(x_{t-1}^i+\delta u_{t-1}^i)\big)u_{t}^i,
\end{align}
where $u_{t-1}^i,u_{t}^i$ are independent random vectors and uniformly sampled in a unit sphere $\mathbb{S}^d$. In the following lemma, we give the basic properties of ORF estimator.

\begin{lem}
The ORF estimator (\ref{2}) satisfies
$\mathbb{E}\left[\widetilde{g}_{t}^i(x_{t}^i)\right]=\nabla f_{\delta,t}^i(x_{t}^i)$ for all $x_t^i\in\mathcal{X}$ and $t.$
\end{lem}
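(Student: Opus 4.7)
The plan is to split $\mathbb{E}[\widetilde{g}_t^i(x_t^i)]$ into two pieces by linearity of expectation and handle them separately:
\begin{equation*}
\mathbb{E}[\widetilde{g}_t^i(x_t^i)] = \frac{d}{\delta}\mathbb{E}\bigl[f_t^i(x_t^i+\delta u_t^i)\,u_t^i\bigr] - \frac{d}{\delta}\mathbb{E}\bigl[f_{t-1}^i(x_{t-1}^i+\delta u_{t-1}^i)\,u_t^i\bigr].
\end{equation*}
For the first piece I would condition on the natural filtration generated by $\{u_s^j\}_{s\le t-1}$ so that $x_t^i$ is measurable, and then apply the standard Flaxman-style identity for the spherical one-point estimator, i.e.\ $\frac{d}{\delta}\mathbb{E}_{u_t^i}[f_t^i(x_t^i+\delta u_t^i)\,u_t^i]=\nabla f_{\delta,t}^i(x_t^i)$, which follows from Stokes' theorem applied to the ball-smoothing $f_{\delta,t}^i(x)=\mathbb{E}_{v\sim\mathbb{B}^d}[f_t^i(x+\delta v)]$. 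Taking the outer expectation then gives $\nabla f_{\delta,t}^i(x_t^i)$.

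For the second piece I would use independence. By construction, $u_t^i$ is drawn independently of the entire past $(u_{t-1}^i,x_{t-1}^i,f_{t-1}^i)$, so the expectation factorizes as
\begin{equation*}
\mathbb{E}\bigl[f_{t-1}^i(x_{t-1}^i+\delta u_{t-1}^i)\,u_t^i\bigr]=\mathbb{E}\bigl[f_{t-1}^i(x_{t-1}^i+\delta u_{t-1}^i)\bigr]\cdot\mathbb{E}[u_t^i].
\end{equation*}
Since $u_t^i$ is uniform on the sphere $\mathbb{S}^d$, the antipodal symmetry $u\mapsto -u$ forces $\mathbb{E}[u_t^i]=0$, so this cross term vanishes identically regardless of how $f_{t-1}^i$, $x_{t-1}^i$, and $u_{t-1}^i$ are distributed. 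This is exactly the observation that makes the residual $f_{t-1}^i(x_{t-1}^i+\delta u_{t-1}^i)$ an admissible ``control variate'': it can be subtracted off without biasing the estimator.

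Combining the two pieces yields $\mathbb{E}[\widetilde{g}_t^i(x_t^i)]=\nabla f_{\delta,t}^i(x_t^i)$. I do not anticipate a significant obstacle here; the proof is essentially a filtration bookkeeping exercise. The only point that requires care is making sure the probabilistic setup is stated precisely enough that $u_t^i$ is independent of the past when we apply the factorization in the second piece, and that $x_t^i$ is measurable with respect to the conditioning $\sigma$-algebra when we apply the Flaxman identity in the first piece. Once these are pinned down, no nontrivial estimation is required.
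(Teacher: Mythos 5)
Your proof is correct and follows essentially the same route as the paper's: the first term is handled by the Flaxman-style one-point identity (which the paper simply cites from its reference [20]), and the second term vanishes because $u_t^i$ is mean-zero on the sphere and independent of $u_{t-1}^i$ and $x_{t-1}^i$; your version merely makes the filtration bookkeeping explicit. The one small caveat is that the identity you invoke produces the gradient of the \emph{ball}-smoothed function, as you correctly write, whereas the paper's displayed definition of $f_{\delta,t}^i$ averages over the unit sphere --- that mismatch is an inconsistency in the paper's setup rather than a flaw in your argument.
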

\begin{proof}
  According to \cite{ref20}, we can obtain that $\frac d\delta f_t^i(x_t^i+\delta u_t^i)u_t^i$ is an unbiased estimator of $\nabla f_{\delta,t}^i(x_t^i)$. Then we can conclude the result since the expectation of $u_t^i$ is $0$ and $u_t^i$ is independent from $u_{t-1}^i,x_{t-1}^i.$ 
\end{proof}

Lemma 4 shows that estimator (\ref{2}) is an unbiased estimation of the gradient of smoothed function $f_{\delta,t}^i$. Then, we can give following lemma to bound the second moment of (\ref{2}).

\begin{lem}
If $f_t^i\in C^{0,0}$ with Lipschitz constant $L_0$ for all time $t.$ and the gradient estimator is updated by rule (5), the flowing inequality holds.
\begin{equation}
  \mathbb{E}[\|\widetilde{g}_t^i(x_t^i)\|^2]\leq\frac{3d^2L_0^2}{\delta^2}\|x_t^i-x_{t-1}^i\|^2+12d^2L_0^2+\frac{3d^2}{\delta^2}\theta_{i,t}^2 ,
  \label{3}
\end{equation}
\noindent where $\theta_{i,t}=\sup\limits_{x\in\mathcal{X}_\delta,t=1,2,\ldots,T}\big|f_{t}^i(x)-f_{t-1}^i(x)\big|$ is the increasing rate of objective function $f_t^i$.
\end{lem}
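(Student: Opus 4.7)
The plan is to start from the explicit form of the estimator and reduce the problem to bounding the scalar difference $f_t^i(x_t^i+\delta u_t^i)-f_{t-1}^i(x_{t-1}^i+\delta u_{t-1}^i)$. Since $\|u_t^i\|=1$, we have
\begin{equation*}
\|\widetilde g_t^i(x_t^i)\|^2=\frac{d^2}{\delta^2}\bigl(f_t^i(x_t^i+\delta u_t^i)-f_{t-1}^i(x_{t-1}^i+\delta u_{t-1}^i)\bigr)^2,
\end{equation*}
so the whole task reduces to a pointwise estimate on the squared function-value gap; once that estimate is deterministic, taking expectations is immediate and the factor $d^2/\delta^2$ will produce the three terms in the claimed bound.

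The key step is a three-term telescoping. I would insert $f_{t-1}^i(x_t^i+\delta u_t^i)$ and $f_{t-1}^i(x_{t-1}^i+\delta u_t^i)$ to rewrite the difference as the sum of
\begin{equation*}
\bigl[f_t^i(x_t^i+\delta u_t^i)-f_{t-1}^i(x_t^i+\delta u_t^i)\bigr],\ \bigl[f_{t-1}^i(x_t^i+\delta u_t^i)-f_{t-1}^i(x_{t-1}^i+\delta u_t^i)\bigr],\ \bigl[f_{t-1}^i(x_{t-1}^i+\delta u_t^i)-f_{t-1}^i(x_{t-1}^i+\delta u_{t-1}^i)\bigr],
\end{equation*}
and apply $(a+b+c)^2\le 3(a^2+b^2+c^2)$. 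The three terms are controlled respectively by: the non-stationarity bound $\theta_{i,t}$ (using the definition of $\theta_{i,t}$ as a supremum over $\mathcal X_\delta$, which is precisely the set where the shifted iterates live by Assumption 2), the Lipschitz constant times $\|x_t^i-x_{t-1}^i\|$, and the Lipschitz constant times $\|\delta u_t^i-\delta u_{t-1}^i\|\le 2\delta$ since both $u_t^i$ and $u_{t-1}^i$ lie on the unit sphere. Plugging in produces $3\theta_{i,t}^2+3L_0^2\|x_t^i-x_{t-1}^i\|^2+12L_0^2\delta^2$, which after multiplication by $d^2/\delta^2$ matches the three stated terms exactly.

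The only slightly delicate point is the third term: one must recognize that even though the two random directions $u_t^i,u_{t-1}^i$ are independent and do not cancel in expectation, their difference is still deterministically bounded by $2$, and it is $\delta$ (not $1$) that multiplies this difference inside the Lipschitz estimate—this is what keeps the constant finite and independent of $\delta$ after the $1/\delta^2$ prefactor, producing exactly the $12d^2L_0^2$ term. Nothing here requires independence or any special distributional property of the $u_t^i$'s beyond $\|u_t^i\|=1$, so the expectation is taken only at the very end and passes through trivially.
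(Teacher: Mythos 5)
Your proposal is correct and follows essentially the same route as the paper: a three-term telescoping of the function-value gap, the inequality $(a+b+c)^2\le 3(a^2+b^2+c^2)$, the Lipschitz bound for the $x$- and $u$-increments (with $\|\delta u_t^i-\delta u_{t-1}^i\|\le 2\delta$ yielding the $12d^2L_0^2$ term), and the definition of $\theta_{i,t}$ over $\mathcal X_\delta$ for the time-shift term. The only cosmetic difference is that you insert intermediate points so the time-shift is evaluated at $x_t^i+\delta u_t^i$ and the Lipschitz steps use $f_{t-1}^i$, whereas the paper evaluates the time-shift at $x_{t-1}^i+\delta u_{t-1}^i$ and applies Lipschitz continuity to $f_t^i$; both orderings give identical bounds.
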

\begin{proof}
From the definition of ORF estimator (2) and the inequality that $(a+b+c)^2\leq3(a^2+b^2+c^2)$, we have that
$$\begin{aligned}\mathbb{E}[\|\widetilde{g}_t^i(x_t^i)\|^2]\leq&\frac{3d^2}{\delta^2}\mathbb{E}\big[\big(f_t^i(x_{t}^i+\delta u_{t}^i)-f_{t}^i(x_{t-1}^i+\delta u_{t}^i)\big)^2\|u_t^i\|^2\big]\\
&+\frac{3d^2}{\delta^2}\mathbb{E}\big[\big(f_t^i(x_{t-1}^i+\delta u_{t}^i)-f_{t}^i(x_{t-1}^i+\delta u_{t-1}^i)\big)\|u_t^i\|^2\big]\\
&+\frac{3d^2}{\delta^2}\mathbb{E}\big[\big(f_t^i(x_{t-1}^i+\delta u_{t-1}^i)-f_{t-1}^i(x_{t-1}^i+\delta u_{t-1}^i)\big)^2\|u_t^i\|^2\big]\\
\leq&\frac{3d^2L_0^2}{\delta^2}\mathbb{E}\big[\|x_t^i-x_{t-1}^i\|^2\big]+12d^2L_0^2+\frac{3d^2}{\delta^2}\theta_{i,t}^2\end{aligned}$$
where the last inequality is based on the conclusion that $\|u_t^i\|^2\leq1$ and $\|u_t^i-u_{t-1}^i\|^2\|u_t^i\|^2\leq2\|u_t^i\|^4+2\|u_t^i\|^2\|u_{t-1}^i\|^2\leq4$.
\end{proof}
\section{Distributed ORF Algorithm for Convex Online Optimization}
In this section, we analyse the distributed online optimization problem where the local objective function is assumed to be convex. Here, we propose the following ORF update rule:
\begin{equation}{\label{4}}
  x_{t+1}^i=\Pi _{\mathcal{X}}\big[\sum_{j=1}^{N}a_{ij}\big(x_t^j-\eta\widetilde{g}_t^j(x_t^j)\big)\big].
\end{equation}
where $\Pi _{\mathcal{X}}$ is projection operator and $\eta>0$ is stepsize. In summary, we give the following algorithm.
\begin{algorithm}
\caption{Distributed ORF Algorithm for Online Convex Optimization}
{\textbf{Initialization}:}
{Initial values of $x^1_0, x^2_0,$
    $ \cdots, x^N_0$, number of iterations T,
    and appropriate value of $\eta$ and $\delta$.}

\textbf{For }$t=0$ to $T$, $i=1$ to $N$ 
   
   \qquad Let $u_t^i$ uniformly sampled in $\mathbb{S}^{d}$ and compute the ORF estimator $\widetilde{g}_{t}^i(x_{t}^i)$ by (\ref{2})
   
   \qquad Update $x^i_{t+1}$ for all agents $i$ by (\ref{4}).\\
\textbf{end for}
\end{algorithm}

Let the  $x^*=\arg\min_{x\in\mathcal{X}}\sum_{t=1}^{T}\sum_{i=1}^{n}f_t^i(x)$ be the optimal decision point, then the static regret can be written as
\begin{equation}\label{5}
  R^T_g=\mathbb{E}\Big[\sum_{t=1}^{T}\sum_{i=1}^{N}\big[f_t^i(x_t^i)-f_t^i(x^*)\big]\Big].
\end{equation}

Based on the definition of regret above, we give the regret bound of Algorithm 1 for two situations with Lipschitz-continuous and smooth objective function.

\begin{theorem}
Suppose Assumption 1 and 2 hold. If $f_{t}^i\in C^{0,0}$ with Lipschitz constant $L_0$ for all $t$ and $i$, run Algorithm 1 with $\eta=\frac{1}{\sqrt{3\alpha}dL_0 T^{\frac{2}{3}}}$ and $\delta=\frac{2}{T^{\frac{1}{3}}}$, the regret bound satisfies
$$R_{g}^T\leq\mathcal{O}\Big(\max\big\{T^{\frac{2}{3}},\Theta_T^2\big\}\Big),$$
where $\Theta_T=\sum_{t=1}^{T}\sum_{i=1}^{N}\theta_{i,t}$ is accumulated increasing rate of objective functions and $\theta_{i,t}$ is defined in Lemma 5.
\end{theorem}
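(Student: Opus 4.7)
The plan is to reduce the problem to analyzing a distributed projected stochastic gradient method on the smoothed objectives and then balance the resulting terms via the prescribed step sizes. First, by Lemma~2,
\[
R_g^T \leq \mathbb{E}\sum_{t=1}^{T}\sum_{i=1}^{N}\bigl[f_{\delta,t}^i(x_t^i) - f_{\delta,t}^i(x^*)\bigr] + 2NT\delta L_0 ,
\]
so I only need a regret bound for the smoothed functions, and the unbiasedness $\mathbb{E}[\widetilde g_t^i]=\nabla f_{\delta,t}^i(x_t^i)$ from Lemma~4 lets me treat $\widetilde g_t^i$ as a stochastic gradient oracle whose second moment is controlled by Lemma~5.

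Next, introduce the network average $\bar x_t = \frac{1}{N}\sum_i x_t^i$. Using non-expansiveness of $\Pi_\mathcal{X}$, the double stochasticity in Assumption~1, and Jensen's inequality, the update~(\ref{4}) yields a per-step descent of the form
\[
\mathbb{E}\|\bar x_{t+1}-x^*\|^2 \leq \mathbb{E}\|\bar x_t-x^*\|^2 - \frac{2\eta}{N}\sum_i \mathbb{E}\bigl\langle \nabla f_{\delta,t}^i(x_t^i),\, x_t^i - x^*\bigr\rangle + \frac{\eta^2}{N}\sum_i \mathbb{E}\|\widetilde g_t^i\|^2 + C_t ,
\]
where $C_t$ collects cross terms proportional to the consensus errors $\|x_t^i-\bar x_t\|$. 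Telescoping over $t=1,\dots,T$, applying convexity of $f_{\delta,t}^i$ to convert the inner product into $f_{\delta,t}^i(x_t^i)-f_{\delta,t}^i(x^*)$, and using $\|\bar x_1-x^*\|\leq 2 r_u$ from Assumption~2 yields a skeleton
\[
\mathbb{E}\sum_{t,i}\bigl[f_{\delta,t}^i(x_t^i) - f_{\delta,t}^i(x^*)\bigr] \leq \frac{N\,\|\bar x_1-x^*\|^2}{2\eta} + \frac{\eta}{N}\sum_{t,i}\mathbb{E}\|\widetilde g_t^i\|^2 + \frac{1}{2\eta}\sum_t C_t .
\]

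The third step handles the two residual quantities. The consensus errors $\|x_t^i-\bar x_t\|$ are controlled by unrolling the doubly-stochastic dynamics and invoking Lemma~1 to exploit the geometric decay $\gamma^{t-s}$, expressing $C_t$ in terms of $\eta$ times the gradient magnitudes. The gradient magnitudes come from Lemma~5 and split into $12 d^2 L_0^2$, $\frac{3 d^2}{\delta^2}\theta_{i,t}^2$, and a self-referential piece $\frac{3 d^2 L_0^2}{\delta^2}\|x_t^i - x_{t-1}^i\|^2$. The successive-iterate difference is in turn bounded by $\eta^2\|\widetilde g_{t-1}^j\|^2$ plus consensus, so the recursion closes provided $\eta^2 d^2 L_0^2/\delta^2 < 1$; the prescribed $\eta=(\sqrt{3\alpha}\,dL_0 T^{2/3})^{-1}$ and $\delta=2 T^{-1/3}$ make this coefficient $\mathcal{O}(T^{-2/3})$, so the recursion unwinds into a finite contribution and the $C_t$ terms get absorbed.

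Putting everything together gives a bound of the shape $\frac{1}{\eta} + \eta T d^2 L_0^2 + \frac{\eta d^2}{\delta^2}\sum_{t,i}\theta_{i,t}^2 + NT\delta L_0$. Substituting the prescribed $\eta$ and $\delta$ balances the $1/\eta$ and $NT\delta L_0$ terms at $\mathcal{O}(T^{2/3})$, while $\frac{\eta d^2}{\delta^2}\sum_{t,i}\theta_{i,t}^2 = \mathcal{O}\bigl(\sum_{t,i}\theta_{i,t}^2\bigr) \leq \mathcal{O}(\Theta_T^2)$ by the elementary inequality $\sum_k a_k^2 \leq (\sum_k a_k)^2$ for nonnegative $a_k$. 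Combining yields the claimed $\mathcal{O}(\max\{T^{2/3},\Theta_T^2\})$. I expect the main obstacle to be the self-referential gradient-variance step described above, since it simultaneously couples the consensus analysis, the projected descent inequality, and Lemma~5 under a single step-size choice.
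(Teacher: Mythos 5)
Your proposal follows essentially the same route as the paper's proof: the smoothing decomposition via Lemma~2, the projected-descent telescoping under double stochasticity, the self-referential second-moment recursion from Lemma~5 closed by $\beta=3\alpha d^2L_0^2\eta^2/\delta^2<1$, the consensus bound via Lemma~1, and the same final parameter balancing with $\sum_{t,i}\theta_{i,t}^2\leq\Theta_T^2$. The only cosmetic difference is that you track $\|\bar{x}_{t+1}-x^*\|^2$ with explicit consensus cross-terms $C_t$, whereas the paper telescopes $\sum_{i=1}^{N}\|x_{t+1}^i-x\|^2$ directly, which makes those cross-terms vanish from the main recursion and confines the consensus analysis to the bound on $\sum_{t,i}\|x_t^i-x_{t-1}^i\|^2$.
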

\begin{proof}
Based on the definition of regret (\ref{5}) and Lemma 2, we can obtain that
\begin{align}\label{6}
\nonumber R_{g}^T=&\mathbb{E}\Big[\sum_{t=1}^{T}\sum_{i=1}^{N}\big[f_t^i(x_t^i)-f_t^i(x^*)\big]\Big]
\nonumber\\=&\mathbb{E}\Big[\sum_{t=1}^{T}\sum_{i=1}^{N}\big[f_t^i(x_t^i)-f_{\delta,t}^i(x_t^i)+f_{\delta,t}^i(x^*)-f_{t}^i(x^*)+f_{\delta,t}^i(x_t^i)-f_{\delta,t}^i(x^*)\big]\Big]
\nonumber\\\leq&\mathbb{E}\Big[\sum_{t=1}^{T}\sum_{i=1}^{N}[f_{\delta,t}^i(x_t^i)-f_{\delta,t}^i(x^*)\big]\Big]+2\delta L_0NT.\end{align}
Then, we analyse the first term of above inequality. It follows from the convexity of $f_{t}^i(x)$ that $f_{\delta,t}^i(x)$ is convex for all $t$ and $i$, hence, it satisfies for any $x\in\mathcal{X}$ that $$f_{\delta,t}^i(x_{t}^i)-f_{\delta,t}^i(x)\leq\langle\nabla f_{\delta,t}^i(x_{t}^i),x_{t}^i-x\rangle.$$
According to Lemma 4, we can take expectation over $u_t^i$ and substitute $\nabla f_{\delta,t}^i(x_t^i)$ with $\tilde{g}_t^i(x_t^i)$. The following inequality still holds: 
\begin{align}\label{7}
\mathbb{E}\big[f_{\delta,t}^i(x_t^i)-f_{\delta,t}^i(x)\big]\leq\mathbb{E}\big[\langle\tilde{g}_t^i(x_t^i),x_t^i-x\rangle\big].
\end{align}
To give the bound of right side of (\ref{7}), we can obtain by (\ref{4}) that
$$\begin{aligned}&\sum_{i=1}^{N}\|x_{t+1}^i-x\|^{2}
\\=&\sum_{i=1}^{N}\|\Pi_{\mathcal{X}}\big[\sum_{j=1}^{N}a_{ij}(x_t^j-\eta\tilde{g}_t^j(x_t^j))\big]-\Pi_{\mathcal{X}}\big[x\big]\|^{2}
\\\leq&\sum_{i=1}^{N}\sum_{j=1}^{N}a_{ij}\|x_{t}^j-\eta\tilde{g}_t^j(x_{t}^j)-x\|^{2}
\\=&\sum_{j=1}^{N}\|x_{t}^j-x\|^{2}+\sum_{j=1}^{N}\eta^{2}\|\tilde{g}_{t}^j(x_{t}^j)\|^{2}-2\sum_{j=1}^{N}\eta\langle\tilde{g}_{t}^j(x_{t}^j),x_{t}^j-x\rangle,\end{aligned}$$
where the last equation holds since the adjacency matrix is double-stochastic. Rearranging the above inequality and taking summation over $t=1,2,\ldots,T$, we have that
$$\begin{aligned}&\sum_{t=1}^{T}\sum_{i=1}^{N}\langle\tilde{g}_{t}^i(x_{t}^i),x_{t}^i-x\rangle
\\\leq&\frac{1}{2\eta}\sum_{t=1}^{T}\big[\sum_{i=1}^{N}\|x_{t}^i-x\|^{2}-\sum_{i=1}^{N}\|x_{t+1}^i-x\|^{2}\big]+\frac{\eta}{2}\sum_{t=1}^{T}\sum_{i=1}^{N}\|\tilde{g}_{t}^i(x_{t}^i)\|^{2}
\\\leq&\frac{1}{2\eta}\sum_{i=1}^{N}\|x_{1}^i-x\|^{2}+\frac{\eta}{2}\sum_{t=1}^{T}\sum_{i=1}^{N}\|\tilde{g}_{t}^i(x_{t}^i)\|^{2}.
\end{aligned}$$
Combining above inequality and (\ref{7}), take expectation over $u_t^i$, we have that
\begin{align}\label{8}
\nonumber&\sum_{t=1}^{T}\sum_{i=1}^{N}\mathbb{E}\big[f_{\delta,t}^i(x_t^i)-f_{\delta,t}^i(x)\big]\\
\leq&\frac{1}{2\eta}\sum_{i=1}^{N}\|x_{1 }^i-x\|^{2}+\frac{\eta}{2}\sum_{t=1}^{T}\sum_{i=1}^{N}\mathbb{E}\big[\|\tilde{g}_{t}^i(x_{t}^i)\|^{2}\big].\end{align}

Then, we give the bound of the last term of (\ref{8}). Sum up (\ref{3}) on both sides over $i=1,2,\ldots,N$ and $t=1,2,\ldots,T$, it yields that
\begin{align}\label{9}
\nonumber&\sum_{t=1}^{T}\sum_{i=1}^{N}\mathbb{E}[\|\widetilde{g}_t^i(x_t^i)\|^2]\\
\leq&\frac{3d^2L_0^2}{\delta^2}\sum_{t=1}^{T}\sum_{i=1}^{N}\mathbb{E}\big[\|x_t^i-x_{t-1}^i\|^2\big]+12d^2L_0^2NT+\frac{3d^2}{\delta^2}\Theta_T^2.\end{align}
To bound the above inequality, we have to bound the term $\sum_{t=1}^{T}\sum_{i=1}^{N}\mathbb{E}\big[\|x_t^i-x_{t-1}^i\|^2\big]$. From (\ref{4}), it holds that
\begin{align}
\nonumber&\sum_{t=0}^{T-1}\sum_{i=1}^{N}\mathbb{E}[\|x_{t+1}^i-x_t^i\|^2]\\
\nonumber\leq&\sum_{t=0}^{T-1}\sum_{i=1}^{N}\mathbb{E}[\|\sum_{j=1}^{n}a_{ij}\big(x_t^j-\eta\widetilde{g}_t^j(x_t^j)\big)-x_t^i\|^2]\\
\nonumber\leq&2\sum_{t=0}^{T-1}\sum_{i=1}^{N}\eta^2\mathbb{E}[\|\widetilde{g}_t^i(x_t^i)\|^2]+2\sum_{t=0}^{T-1}\sum_{i=1}^{N}\mathbb{E}[\|\sum_{j=1}^{N}a_{ij}x_t^j-x_t^i\|^2]\\
\nonumber\leq&2\sum_{t=0}^{T-1}\sum_{i=1}^{N}\eta^2\mathbb{E}[\|\widetilde{g}_t^i(x_t^i)\|^2]+2\sum_{t=0}^{T-1}\sum_{i=1}^{N}\sum_{j=1}^{N}a_{ij}\|x_t^j-x_t^i\|^2.
\end{align}
For the last term of the above inequality, we have that
$$\begin{aligned}
\sum_{t=0}^{T-1}\sum_{i=1}^N\sum_{j=1}^{N}a_{ij}\|x_t^j-x_t^i\|^2\leq2\sum_{t=0}^{T-1}\sum_{i=1}^N\sum_{j=1}^{N}a_{ij}\|x_t^i-\bar{x}_t\|^2=2\sum_{t=0}^{T-1}\sum_{i=1}^N\|x_t^i-\bar{x}_t\|^2.
\end{aligned}$$
where $\bar{x}_t=\sum_{i=1}^{N}x_t^i$. Define the projection error $$e_t^i=\Pi_{\mathcal{X}}\big[\sum_{j=1}^{N}a_{ij}\big(x_{t-1}^j-\eta\widetilde{g}_{t-1}^j(x_{t-1}^j)\big)\big]-\sum_{j=1}^{N}a_{ij}\big(x_{t-1}^j-\eta\widetilde{g}_{t-1}^j(x_{t-1}^j)\big),$$ then $x_t^i=\sum_{j=1}^{N}a_{ij}\big(x_{t-1}^j-\eta\widetilde{g}_{t-1}^j(x_{t-1}^j)\big)+e_t^i$. Based on (\ref{4}), it satisfies that
\begin{align}
\nonumber x_t^i=&\sum_{j=1}^{N}\Big[\Phi(t)\Big]_{ij}x_0^j-\eta\sum_{j=1}^{N}\sum_{\tau=0}^{t-1}\Big[\Phi(t-\tau)\Big]_{ij}\widetilde{g}_{\tau}^j(x_\tau^j)\\
&+\sum_{j=1}^{N}\sum_{\tau=1}^{t}\Big[\Phi(t-\tau)\Big]_{ij}e_\tau^j.
\end{align}
By taking average of both side of above equality, we have that
\begin{align}
\nonumber\bar{x}_t=&\frac{1}{N}\sum_{j=1}^{N}x_0^j-\frac{\eta}{N}\sum_{j=1}^{N}\sum_{\tau=0}^{t-1}\widetilde{g}_{\tau}^j(x_\tau^j)+\frac{1}{N}\sum_{j=1}^{N}\sum_{\tau=1}^{t}e_\tau^j.
\end{align}
To give the bound of projection error $e_t^i$, we use the property of projection and it follows that
\begin{align}
\nonumber\|e_t^i\|^2=&\Big\|\Pi_{\mathcal{X}}\big[\sum_{j=1}^{N}a_{ij}\big(x_{t-1}^j-\eta\widetilde{g}_{t-1}^j(x_{t-1}^j)\big)\big]-\sum_{j=1}^{N}a_{ij}\big(x_{t-1}^j-\eta\widetilde{g}_{t-1}^j(x_{t-1}^j)\big)\Big\|^2\\
\nonumber\leq&2\Big\|\Pi_{\mathcal{X}}\big[\sum_{j=1}^{N}a_{ij}\big(x_{t-1}^j-\eta\widetilde{g}_{t-1}^j(x_{t-1}^j)\big)\big]-\sum_{j=1}^{N}a_{ij}x_{t-1}^j\Big\|^2+2\|\eta\sum_{j=1}^{N}a_{ij}\widetilde{g}_{t-1}^i(x_{t-1}^i)\|^2\\
\nonumber\leq&4\eta^2\sum_{j=1}^{N}\|\widetilde{g}_{t-1}^i(x_{t-1}^i)\|^2.
\end{align}
Combining above inequalities and conclusion of Lemma 1, the term $\|x_t^i-\bar{x}_t\|^2$ can be rewritten as
\begin{align}\label{11}
\nonumber&\|x_t^i-\bar{x}_t\|^2\\
\nonumber=&\Big\|\sum_{j=1}^{N}\Big(\big[\Phi(t)\big]_{ij}-\frac{1}{N}\Big)x_0^j-\eta\sum_{j=1}^{N}\sum_{\tau=0}^{t-1}\Big(\big[\Phi(t-\tau)\big]_{ij}-\frac{1}{N}\Big)\widetilde{g}_{\tau}^j(x_\tau^j)\\
\nonumber&+\sum_{j=1}^{N}\sum_{\tau=1}^{t}\Big(\big[\Phi(t-\tau)\big]_{ij}-\frac{1}{N}\Big)e_\tau^j\Big\|^2\\
\nonumber\leq&3N\gamma^{2(t-1)}\sum_{j=1}^{N}\|x_0^j\|^2+3N\eta^2\sum_{\tau=0}^{t-1}\gamma^{2(t-1-\tau)}\sum_{j=1}^{N}\|\widetilde{g}_{\tau}^j(x_\tau^j)\|^2\\
\nonumber&+3N\eta^2\sum_{\tau=1}^{t}\gamma^{2(t-1-\tau)}\sum_{j=1}^{N}\|e_\tau^j\|^2\\
\nonumber\leq&3N\gamma^{2(t-1)}\sum_{j=1}^{N}\|x_0^j\|^2+15N\eta^2\sum_{\tau=0}^{t-1}\gamma^{2(t-1-\tau)}\sum_{j=1}^{N}\|\widetilde{g}_{\tau}^j(x_\tau^j)\|^2.\\
\end{align}
Then we sum up (\ref{11}) on both sides from $i=1,2,\ldots,N$ and $t=0,1,\ldots,T-1$, it follows that
\begin{align*}
\nonumber&\sum_{t=0}^{T-1}\sum_{i=1}^{N}\|x_t^i-\bar{x}_t\|^2\\
\nonumber\leq&3N^2\sum_{t=0}^{T-1}\gamma^{2(t-1)}\sum_{j=1}^{N}\|x_0^j\|^2+15N^2\eta^2\sum_{t=1}^{T-1}\sum_{\tau=0}^{t-1}\gamma^{2(t-1-\tau)}\sum_{j=1}^{N}\|\widetilde{g}_{\tau}^j(x_\tau^j)\|^2\\
\leq&\frac{3N^3r_u^2}{1-\gamma^2}+\frac{15N^2\eta^2}{1-\gamma^2}\sum_{t=0}^{T-1}\sum_{j=1}^{N}\|\widetilde{g}_{t}^j(x_{t}^j)\|^2.
\end{align*}
where the last inequality holds according to the conclusion that $$\begin{aligned}&\sum_{t=1}^{T-1}\sum_{\tau=0}^{t-1}\gamma^{2(t-1-\tau)}\sum_{j=1}^{n}\|\widetilde{g}_{\tau}^j(x_\tau^j)\|^2\\
=&\sum_{\tau=0}^{T-2}\sum_{t=\tau+1}^{T-1}\gamma^{2(t-2-\tau)}\sum_{j=1}^{n}\|\widetilde{g}_{\tau}^j(x_\tau^j)\|^2\\
\leq&\frac{1}{1-\gamma^2}\sum_{\tau=0}^{T-2}\sum_{j=1}^{N}\|\widetilde{g}_{\tau}^j(x_\tau^j)\|^2.\end{aligned}$$
So we have that 
\begin{align}\label{12}
\nonumber&\sum_{t=0}^{T-1}\sum_{i=1}^{N}\mathbb{E}[\|x_{t+1}^i-x_t^i\|^2]\\
\nonumber\leq&2\sum_{t=0}^{T-1}\sum_{i=1}^{N}\eta^2\mathbb{E}[\|\widetilde{g}_t^i(x_t^i)\|^2]+4\sum_{t=0}^{T-1}\sum_{i=1}^{N}\mathbb{E}\big[\|x_t^i-\bar{x}_t\|^2\big]\\
\leq&\frac{12N^3r_u^2}{1-\gamma^2}+(2\eta^2+\frac{60N^2\eta^2}{1-\gamma^2})\sum_{t=0}^{T-1}\sum_{j=1}^{N}\mathbb{E}\big[\|\widetilde{g}_{t}^j(x_{t}^j)\|^2\big].
\end{align}
To give the bound of (\ref{8}), we have to bound $\sum_{t=1}^{T}\sum_{i=1}^{N}\mathbb{E}[\|\widetilde{g}_t^i(x_t^i)\|^2]$. Substitute (\ref{12}) into (\ref{8}) and note $\alpha=2+\frac{60N^2} {1-\gamma^2}$, we have that
\begin{align}
\nonumber\sum_{t=1}^{T}\sum_{i=1}^{N}\mathbb{E}[\|\widetilde{g}_t^i(x_t^i)\|^2]\leq&\frac{3d^2L_0^2}{\delta^2}\sum_{t=1}^{T}\sum_{i=1}^{N}\mathbb{E}\big[\|x_t^i-x_{t-1}^i\|^2\big]+12d^2L_0^2NT+\frac{3d^2}{\delta^2}\Theta_T^2\\
\nonumber\leq&\frac{36d^2L_0^2 N^3r_u^2}{(1-\gamma^2)\delta^2}+\frac{3\alpha d^2L_0^2\eta^2}{\delta^2}\sum_{t=0}^{T-1}\sum_{i=1}^{N}\|\widetilde{g}_{t}^i(x_{t}^i)\|^2\\
\nonumber&+12d^2L_0^2NT+\frac{3d^2}{\delta^2}\Theta_T^2.
\end{align}
Then, add $\frac{3\alpha d^2L_0^2\eta^2}{\delta^2}\sum_{i=1}^{N}\|\widetilde{g}_{T}^i(x_{T}^i)\|^2$ on the right side and rearrange. Let $\beta=\frac{3\alpha d^2L_0^2\eta^2}{\delta^2}$, it clear that $\beta<1$ from $\eta=\frac{1}{\sqrt{3\alpha}dL_0 T^{\frac{2}{3}}}$ and it follows that
\begin{align}\label{13}
\nonumber&\sum_{t=1}^{T}\sum_{i=1}^{N}\mathbb{E}[\|\widetilde{g}_t^i(x_t^i)\|^2]\\
\leq&\frac{36d^2L_0^2N^3r_u^2}{(1-\gamma^2)(1-\beta)\delta^2}+\frac{12d^2L_0^2NT}{1-\beta}+\frac{3d^2}{(1-\beta)\delta^2}\Theta_T^2+\frac{1}{1-\beta}\sum_{i=1}^{N}\mathbb{E}[\|\widetilde{g}_0^i(x_0^i)\|^2].
\end{align}
Substituting (\ref{8}) and (\ref{13}) into (\ref{6}), it yields that
\begin{align}\label{14}
\nonumber R_g^T\leq&\frac{\eta}{2}\Big[\frac{36d^2L_0^2N^3r_u^2}{(1-\gamma^2)(1-\beta)\delta^2}+\frac{12d^2L_0^2NT}{1-\beta}+\frac{3d^2}{(1-\beta)\delta^2}\Theta_T^2\\
&+\frac{1}{1-\beta}\sum_{i=1}^{N}\mathbb{E}[\|\widetilde{g}_0^i(x_0^i)\|^2]\Big]+\frac{2}{\eta}Nr_u^2+2\delta L_0NT.
\end{align}
Set the parameter $\eta=\frac{1}{\sqrt{3\alpha}dL_0 T^{\frac{2}{3}}}$ and $\delta=\frac{2}{T^{\frac{1}{3}}}$, the regret of Algorithm 1 satisfies
$$R_{g}^T\leq\mathcal{O}\Big(\max\big\{T^{\frac{2}{3}},\Theta_T^2\big\}\Big).$$
\end{proof}
Similar to the analysis above, we consider the distributed online optimization problem where the objective function is smooth with smoothness parameter $L_1$. The following theorem gives the regret bound of Algorithm 1 for smooth and convex objective function.
\begin{theorem}
Suppose Assumption 1 and 2 hold. If $f_{t}^i\in C^{1,1}$ is smooth with smoothness constant $L_1$ for all $t$ and $i$, run Algorithm 1 with $\eta=\frac{1}{\sqrt{3\alpha}dL_0 T^{\frac{1}{2}}}$ and $\delta=\frac{2}{T^{\frac{1}{4}}}$. The regret bound satisfies
$$R_{g}^T\leq\mathcal{O}\Big(\max\big\{T^{\frac{1}{2}},\Theta_T^2\big\}\Big).$$
\end{theorem}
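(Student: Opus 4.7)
The plan is to mimic the argument of Theorem 1 line-by-line, exploiting the smoother structure only at the two places where it actually helps: the approximation error between $f_t^i$ and its smoothed surrogate $f_{\delta,t}^i$, and the rescaling of $\eta,\delta$. I would start from the same decomposition as in (\ref{6}),
\begin{align*}
R_g^T \le \mathbb{E}\Big[\sum_{t=1}^T\sum_{i=1}^N \bigl(f_{\delta,t}^i(x_t^i)-f_{\delta,t}^i(x^\ast)\bigr)\Big] + \sum_{t=1}^T\sum_{i=1}^N \bigl(|f_{\delta,t}^i(x_t^i)-f_t^i(x_t^i)| + |f_{\delta,t}^i(x^\ast)-f_t^i(x^\ast)|\bigr),
\end{align*}
but now invoke the second branch of Lemma 3: under $f_t^i\in C^{1,1}$ the pointwise approximation error is $\delta^2 L_1$ rather than $\delta L_0$. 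The unstructured penalty therefore shrinks from $2\delta L_0 NT$ down to $2\delta^2 L_1 NT$, and this is precisely what will allow the larger choice $\delta=2/T^{1/4}$.

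The middle of the proof should be essentially a transcription of the convex case. Convexity of $f_{\delta,t}^i$ together with Lemma 4 gives the same per-step inequality (\ref{7}); the projection-averaging identity using Assumption 1(b) telescopes exactly as before; and the consensus/projection-error bounds (\ref{11})–(\ref{12}) depend on the network and on $\|\tilde g_t^i\|^2$, not on the regularity class of $f_t^i$. Consequently, the variance bound of Lemma 5 can be applied verbatim once we note that a $C^{1,1}$ function on the bounded set $\mathcal{X}_\delta$ (Assumption 2) is automatically Lipschitz, so the constant $L_0$ inherited from that set plays the role it had in Theorem 1. The culmination is the same composite inequality (\ref{14}), with the last term $2\delta L_0 NT$ replaced by $2\delta^2 L_1 NT$:
\begin{align*}
R_g^T \le \frac{\eta}{2}\bigg[\frac{36d^2L_0^2 N^3 r_u^2}{(1-\gamma^2)(1-\beta)\delta^2} + \frac{12d^2L_0^2 NT}{1-\beta} + \frac{3d^2}{(1-\beta)\delta^2}\Theta_T^2 + \frac{1}{1-\beta}\sum_{i=1}^N\mathbb{E}\|\tilde g_0^i(x_0^i)\|^2\bigg] + \frac{2Nr_u^2}{\eta} + 2\delta^2 L_1 NT.
\end{align*}

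With $\eta = 1/(\sqrt{3\alpha}\,dL_0 T^{1/2})$ and $\delta=2/T^{1/4}$ one gets $\beta = 3\alpha d^2 L_0^2 \eta^2/\delta^2 = 1/(4T^{1/2}) < 1$, so $1/(1-\beta)$ is a harmless constant. A direct scaling count then gives: $\eta\cdot d^2L_0^2 NT = O(T^{1/2})$, $Nr_u^2/\eta = O(T^{1/2})$, $\delta^2 L_1 NT = O(T^{1/2})$, $\eta/\delta^2 = O(1)$, and the $\Theta_T^2$ term contributes $O(\Theta_T^2)$, proving $R_g^T = O(\max\{T^{1/2},\Theta_T^2\})$. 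The one place that requires care — and what I would flag as the main obstacle — is the trade-off between $\eta$ and $\delta$: the variance-like terms $\eta/\delta^2$ and $\eta\Theta_T^2/\delta^2$ must stay bounded even though $\delta$ is now larger (the smoother problem tolerates a coarser smoothing kernel), while simultaneously the $1/\eta$ and $\delta^2 T$ terms both have to remain $O(T^{1/2})$. Verifying that the chosen exponents $1/2$ and $1/4$ are exactly the ones that balance these competing requirements is the crux of the argument; the rest is essentially a copy of the convex Lipschitz proof with $\delta^2 L_1$ written in place of $\delta L_0$.
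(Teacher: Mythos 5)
Your proposal is correct and follows essentially the same route as the paper: reuse the entire Theorem 1 machinery, replace the approximation-error term $2\delta L_0 NT$ in (\ref{14}) with $2\delta^2 L_1 NT$ via the $C^{1,1}$ branch of the smoothing-error lemma (Lemma 2 in the paper, not Lemma 3), and rebalance $\eta=\mathcal{O}(T^{-1/2})$, $\delta=\mathcal{O}(T^{-1/4})$. Your explicit remark that a $C^{1,1}$ function on the bounded set is still Lipschitz (so Lemma 5 and the $L_0$-dependent terms remain valid) and your verification that $\beta=1/(4\sqrt{T})<1$ are details the paper leaves implicit, and your scaling count matches the stated bound.
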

\begin{proof}
According to the conclusion of Lemma 2, we know that $|f_{\delta,t}^i(x)-f_t^i(x)|\leq\delta^2L_1$. Following the same proof logic of Theorem 1, we simply substitute the term $2\delta L_0NT$ in (\ref{14}) with $2\delta^2L_1NT$. Then we can obtain that
\begin{align}\label{15}
\nonumber R_g^T\leq&\frac{\eta}{2}\Big[\frac{36d^2L_0^2N^3r_u^2}{(1-\gamma^2)(1-\beta)\delta^2}+\frac{12d^2L_0^2NT}{1-\beta}+\frac{3d^2}{(1-\beta)\delta^2}\Theta_T^2\\
&+\frac{1}{1-\beta}\sum_{i=1}^{N}\mathbb{E}[\|\widetilde{g}_0^i(x_0^i)\|^2]\Big]+\frac{2}{\eta}Nr_u^2+2\delta^2L_1NT.
\end{align}
Set the parameter $\eta=\frac{1}{\sqrt{3\alpha}dL_0 T^{\frac{1}{2}}}$ and $\delta=\frac{2}{T^{\frac{1}{4}}}$, the regret of Algorithm 1 satisfies
$$R_{g}^T\leq\mathcal{O}\Big(\max\big\{T^{\frac{1}{2}},\Theta_T^2\big\}\Big).$$
\end{proof}

\section{Distributed ORF Algorithm for Non-Convex Online Optimization}
In this section, we consider the distributed online non-convex optimization problems and give the following projected update rule with ORF:
\begin{equation}\label{16}
  x_{t+1}^i=\Pi_{\mathcal{X}}\big[\sum_{j=1}^{N}a_{ij}x_t^j-\eta\widetilde{g}_t^i(x_t^i)\big].
\end{equation}

First, we consider the case where the objective functions $\{f_{t}^i\}$ are non-convex and Lipschitz continuous. For non-convex optimization problems, it may be difficult to find the global optimal point of the objective function and the accumulation of gradient is always used to redefine the regret. In practical problems, the objective function $f_t^i$ may be not necessarily differentiable. Here, we use the sum of gradient of the smoothed function to define the regret as follows, $$R_{g,\delta}^{T}:=\sum_{t=0}^{T-1}\sum_{i=1}^{N}\mathbb{E}[\|\nabla f_{\delta,t}^i(x_{t}^i)\|^{2}].$$ 

Similar to the analysis of ZO in \cite{ref16} for static nonsmooth optimization problems, we have to make the smoothed function $f_{\delta,t}^i$ is as close as possible to the original function $f_{t}^i$. Based on Lemma 2, we set  $\delta\leq(L_0)^{-1}\epsilon_f$ to make the difference of $f_{\delta,t}^i$ and $f_{t}^i$ is less than a small positive scaler $\epsilon_f$. Then, we define the increasing rate of smoothed function as follows:$$
\theta_{\delta,t}^i=\sup\limits_{x\in\mathcal{X},t=1,2,\ldots,T}\big|f_{\delta,t+1}^i(x)-f_{\delta,t}^i(x)\big|,$$
$$\Theta_{T,\delta}=\sum_{t=1}^{T}\sum_{i=1}^{N}\theta_{\delta,t}^i.$$
Since the difference between smoothed function and original function is small enough, we further assume that $\Theta_{T,\delta}=\mathcal{O}(\Theta_T)$, which means the accumulated increasing rate of smoothed function is same as original function. Now, we give the following algorithm and theorem of regret bound for non-convex optimization problem.
\clearpage
\begin{algorithm}
\caption{Distributed ORF Algorithm for Online Non-Convex Optimization}
{\textbf{Initialization}:}
{Initial values of $x^1_0, x^2_0,$
    $ \cdots, x^N_0$, number of iterations T,
    and appropriate value of $\eta$ and $\delta$.}

\textbf{For }$t=0$ to $T$, $i=1$ to $N$ 
   
   \qquad Let $u_t^i$ uniformly sampled in $\mathbb{S}^{d}$ and compute the ORF estimator $\widetilde{g}_{t}^i(x_{t}^i)$ by (\ref{2}).
   
   \qquad Update $x^i_{t+1}$ for all agents $i$ by (\ref{16}).
   
\textbf{end for}
\end{algorithm}

\begin{theorem}
Suppose Assumption 1 and 2 hold. If $f_{t}^i\in C^{0,0}$ with Lipschitz constant $L_0$ for all $t$ and $i$, run Algorithm 2 with $\eta=\frac{1}{\sqrt{3\alpha}dL_0T^{\frac{1}{4}}}$ and $\delta=\frac{\epsilon_f}{L_{0}}$. The regret bound satisfies
$$R_{g,\delta}^T\leq\mathcal{O}\Big(\max\big\{T^{\frac{1}{4}}\Theta_T,T^{\frac{3}{4}},\frac{\Theta_T^2}{T^{\frac{1}{4}}}\big\}\Big).$$

\end{theorem}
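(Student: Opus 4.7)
The plan is to mirror the structure of the proof of Theorem 1 but replace the convexity-based argument by a smooth descent inequality, which is available because Lemma 3 promotes the Lipschitz function $f_t^i$ to a smoothed function $f_{\delta,t}^i\in C^{1,1}$ with smoothness parameter $L_{\delta}=dL_0/\delta$; since $\delta=\epsilon_f/L_0$ is constant in $T$, so is $L_{\delta}$. This is essentially a distributed, non-stationary extension of the standard zeroth-order non-convex analysis.

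First, I would apply the smooth descent inequality at each agent,
$$f_{\delta,t}^i(x_{t+1}^i)\leq f_{\delta,t}^i(x_t^i)+\langle\nabla f_{\delta,t}^i(x_t^i),x_{t+1}^i-x_t^i\rangle+\tfrac{L_{\delta}}{2}\|x_{t+1}^i-x_t^i\|^2,$$
and decompose $x_{t+1}^i-x_t^i$ according to (\ref{16}) as $x_{t+1}^i-x_t^i=\bigl(\sum_{j=1}^{N}a_{ij}x_t^j-x_t^i\bigr)-\eta\widetilde{g}_t^i(x_t^i)+e_t^i$, where $e_t^i$ is the projection residual of the update, satisfying $\|e_t^i\|\leq\eta\|\widetilde{g}_t^i(x_t^i)\|$ since $\sum_ja_{ij}x_t^j\in\mathcal{X}$. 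Taking conditional expectation over $u_t^i$, Lemma 4 produces the key $-\eta\,\mathbb{E}\|\nabla f_{\delta,t}^i(x_t^i)\|^2$ term from the inner product with $-\eta\widetilde{g}_t^i(x_t^i)$. The consensus-drift inner product is controlled by Young's inequality and absorbed into $\|x_t^i-\bar{x}_t\|^2$, while the residual and quadratic $\|x_{t+1}^i-x_t^i\|^2$ term collect a factor of $L_{\delta}\eta^2\|\widetilde{g}_t^i\|^2$ plus consensus terms.

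Second, I would absorb the non-stationarity by splitting
$$f_{\delta,t+1}^i(x_{t+1}^i)-f_{\delta,t}^i(x_t^i)=\bigl[f_{\delta,t+1}^i(x_{t+1}^i)-f_{\delta,t}^i(x_{t+1}^i)\bigr]+\bigl[f_{\delta,t}^i(x_{t+1}^i)-f_{\delta,t}^i(x_t^i)\bigr],$$
bounding the first bracket by $\theta_{\delta,t+1}^i$, plugging the descent inequality into the second, and summing over $t=0,\dots,T-1$ and $i=1,\dots,N$. The left side telescopes and is $\mathcal{O}(N)$ because $f_t^i$ is $L_0$-Lipschitz on $\mathcal{X}\subseteq r_u\mathbb{B}^d$. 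After rearranging,
$$\eta\,R_{g,\delta}^T\leq\mathcal{O}(N)+\Theta_{T,\delta}+\mathcal{O}\Bigl(\sum_{t,i}\mathbb{E}\|x_t^i-\bar{x}_t\|^2\Bigr)+\mathcal{O}\Bigl(L_{\delta}\sum_{t,i}\mathbb{E}\|x_{t+1}^i-x_t^i\|^2\Bigr).$$
At this point I would import, essentially verbatim, the two workhorse estimates from the proof of Theorem 1: the network-disagreement bound built on Lemma 1, and the self-bounded variance estimate from Lemma 5 combined with (\ref{12}), which together yield $\sum_{t,i}\mathbb{E}\|\widetilde{g}_t^i\|^2=\mathcal{O}(T+\Theta_T^2)$ once $\beta=3\alpha d^2L_0^2\eta^2/\delta^2<1$ is verified at the chosen $\eta$. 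The structure of these bounds is unchanged; only the constants differ since $\delta$ is now a fixed number.

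Substituting everything back yields
$$R_{g,\delta}^T\leq\mathcal{O}(1/\eta)+\mathcal{O}(\Theta_{T,\delta}/\eta)+\mathcal{O}(\eta T)+\mathcal{O}(\eta\Theta_T^2),$$
and plugging in $\eta=T^{-1/4}/(\sqrt{3\alpha}dL_0)$ with $\Theta_{T,\delta}=\mathcal{O}(\Theta_T)$ gives the claimed $\mathcal{O}(\max\{T^{1/4}\Theta_T,T^{3/4},\Theta_T^2/T^{1/4}\})$ rate. The main obstacle is the interaction between the smooth descent inequality and the distributed projected update: the effective increment $x_{t+1}^i-x_t^i$ is not a clean $-\eta\nabla f_{\delta,t}^i(x_t^i)$ step but a sum of consensus drift, stochastic gradient, and projection residual, so one must verify that the non-gradient contributions are absorbed into lower-order terms through the $1/(1-\gamma^2)$ factor of Lemma 1 without cancelling the leading $-\eta\,\mathbb{E}\|\nabla f_{\delta,t}^i(x_t^i)\|^2$. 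The second delicate step, inherited from Theorem 1, is the self-bounding of $\sum\|\widetilde{g}_t^i\|^2$ through $\sum\|x_t^i-x_{t-1}^i\|^2$, which requires $\beta<1$ at the chosen $\eta$.
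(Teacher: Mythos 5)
Your proposal follows essentially the same route as the paper's proof: the descent lemma applied to the $L_{\delta}$-smooth surrogate $f_{\delta,t}^i$, the telescoping-plus-$\Theta_{T,\delta}$ treatment of non-stationarity, and the reuse of the second-moment and consensus bounds (\ref{12})--(\ref{13}) from Theorem 1 with the $\beta<1$ check. The only cosmetic differences are that you absorb the consensus-drift inner product via Young's inequality into the disagreement term, where the paper uses the cruder Cauchy--Schwarz bound $2r_u\,\mathbb{E}\|\widetilde{g}_t^i\|$ (both yield the same rate), and your projection-residual bound $\|e_t^i\|\leq\eta\|\widetilde{g}_t^i\|$ is a factor of two tighter than the paper's.
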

\begin{proof}
According to Lemma 3 and $f_t^i(x)\in C^{0,0}$, the gradient of smoothed function $f_{\delta,t}^i(x)$ is $L_{\delta}$-Lipschitz, where $L_{\delta}=\frac d\delta L_0.$ In addition, from Lemma 1.2.3 in \cite{ref26}, we can obtain that
\begin{align}\label{17}
\nonumber \mathbb{E}\big[\sum_{t=0}^{T-1}\sum_{i=1}^{N}f_{\delta,t}^i(x_{t+1}^i)\big]\leq&\mathbb{E}\Big[\sum_{t=0}^{T-1}\sum_{i=1}^{N}\big[f_{\delta,t}^i(x_t^i)+\langle\nabla f_{\delta,t}^i(x_t^i),x_{t+1}^i-x_t^i\rangle\\
&+\frac{L_{\delta}}{2}\|x_{t+1}^i-x_t^i\|^2\big]\Big].\end{align}

\noindent From Lemma 4, we have that $\mathbb{E}_{u_{t}^i}[\tilde{g}_{t}^i(x_{t}^i)]=\nabla f_{\delta,t}^i(x_{t}^i).$ Since the radium of constraint set is bounded by $r_u$, we redefine projection error as $$e_t^i=\Pi_{\mathcal{X}}\big[\sum_{j=1}^{N}a_{ij}x_{t-1}^j-\eta\widetilde{g}_{t-1}^i(x_{t-1}^i)\big]-\big(\sum_{j=1}^{N}a_{ij}x_{t-1}^j-\eta\widetilde{g}_{t-1}^i(x_{t-1}^i)\big),$$ it follows that 
\begin{align}\label{18}
\nonumber&\sum_{t=0}^{T-1}\sum_{i=1}^{N}\mathbb{E}[\langle\nabla f_{\delta,t}^i(x_t^i),x_{t+1}^i-x_t^i\rangle]\\
\nonumber=&\sum_{t=0}^{T-1}\sum_{i=1}^{N}\mathbb{E}[\langle\nabla \nonumber f_{\delta,t}^i(x_t^i),e_{t+1}^i+\sum_{j=1}^{N}a_{ij}x_t^j-\eta\widetilde{g}_t^i(x_t^i)-x_t^i\rangle]\\
\nonumber=&-\sum_{t=0}^{T-1}\sum_{i=1}^{N}\eta\mathbb{E}[\|\nabla f_{\delta,t}^i(x_{t}^i)\|^{2}]+\sum_{t=0}^{T-1}\sum_{i=1}^{N}\mathbb{E}[\langle\nabla \nonumber f_{\delta,t}^i(x_t^i),\sum_{j=1}^{N}a_{ij}x_t^j-x_t^i+e_{t+1}^i\rangle]\\
\nonumber\leq&-\eta\sum_{t=0}^{T-1}\sum_{i=1}^{N}\mathbb{E}[\|\nabla f_{\delta,t}^i(x_{t}^i)\|^{2}]+2\sum_{t=0}^{T-1}\sum_{i=1}^{N} r_u\mathbb{E}[\|\widetilde{g}_t^i(x_t^i)\|]\\
&+\sum_{t=0}^{T-1}\sum_{i=1}^{N}\mathbb{E}[\langle\nabla f_{\delta,t}^i(x_t^i),e_{t+1}^i\rangle].\end{align}
From the definition of $e_t^i$ and (\ref{3}), we have that
$$\begin{aligned}
\nonumber\|e_{t+1}^i\|=&\Big\|\Pi_{\mathcal{X}}\big[\sum_{j=1}^{N}a_{ij}x_{t}^j-\eta\widetilde{g}_{t}^i(x_{t}^i)\big]-\big(\sum_{j=1}^{N}a_{ij}x_{t}^j-\eta\widetilde{g}_{t}^i(x_{t}^i)\big)\Big\|\\
\nonumber\leq&\Big\|\Pi_{\mathcal{X}}\big[\sum_{j=1}^{N}a_{ij}x_{t}^j-\eta\widetilde{g}_{t}^i(x_{t}^i)\big]-\sum_{j=1}^{N}a_{ij}x_{t}^j\Big\|+\|\eta\widetilde{g}_{t}^i(x_{t}^i)\|\\
\nonumber\leq&2\eta\|\widetilde{g}_{t}^i(x_{t}^i)\|\\
\nonumber\leq&2\eta\big[\frac{2\sqrt{3}dL_0r_u}{\delta}+2\sqrt{3}dL_0+\frac{\sqrt{3}d}{\delta}\theta_{i,t}^2\big].
\end{aligned}$$
Since $\Theta_T/T\rightarrow0$ as $T\rightarrow\infty$, $\theta_{i,t}$ is clearly bounded by a positive constant. Then, we let $\overline{\theta}=\max\limits_{i=1,2,\ldots,N,t=1,2,\ldots,T}\{\theta_{i,t}\}$ and note $B=\frac{2\sqrt{3}dL_0r_u}{\delta}+2\sqrt{3}dL_0+\frac{2\sqrt{3}d}{\delta}\overline{\theta}$ and have that $\|e_{t+1}^i\|\leq2\eta B$. Combine (\ref{18}) and the inequality above, we have that
$$\begin{aligned}
&\sum_{t=0}^{T-1}\sum_{i=1}^{N}\mathbb{E}[\langle\nabla f_{\delta,t}^i(x_t^i),x_{t+1}^i-x_t^i\rangle]\\
\leq&-\eta\sum_{t=0}^{T-1}\sum_{i=1}^{N}\mathbb{E}[\|\nabla f_{\delta,t}^i(x_{t}^i)\|^{2}]+2(r_u+\eta B)\sum_{t=0}^{T-1}\sum_{i=1}^{N} \mathbb{E}[\|\widetilde{g}_t^i(x_ t^i)\|].
\end{aligned}$$
Substitute above inequality into (\ref{17}) and rearrange, it holds that
\begin{align}\label{19}
\nonumber&\sum_{t=0}^{T-1}\sum_{i=1}^{N}\mathbb{E}[\|\nabla f_{\delta,t}^i(x_{t}^i)\|^{2}]\\
\nonumber\leq&\frac{1}{\eta}\sum_{t=0}^{T-1}\sum_{i=1}^{N}\mathbb{E}\Big[f_{\delta,t}^i(x_t^i)-f_{\delta,t}^i(x_{t+1}^i)+2(r_u+\eta B)\mathbb{E}[\|\widetilde{g}_t^i(x_t^i)\|]\\
&+\frac{L_{\delta}}{2}\|x_{t+1}^i-x_t^i\|^2\Big].
\end{align}
In addition, based on (\ref{13}) and the conclusion that $\mathbb{E}[\mathbf{x}^2]\geq(\mathbb{E}\mathbf{x})^2$ and $\sum{a_i^2}\leq(\sum{a_i})^2$, we can also give the bound of $\sum_{t=0}^{T-1}\sum_{i=1}^{N}\mathbb{E}[\|\widetilde{g}_t^i(x_t^i)\|]$ as
\begin{align}\label{20}
\nonumber&\sum_{t=0}^{T-1}\sum_{i=1}^{N}\mathbb{E}[\|\widetilde{g}_t^i(x_t^i)\|]\leq\sum_{t=0}^{T}\sum_{i=1}^{N}\mathbb{E}[\|\widetilde{g}_t^i(x_t^i)\|]\\
\leq&\frac{6dL_0 N^{\frac{3}{2}}r_u}{\sqrt{(1-\gamma^2)(1-\beta)}\delta}+\frac{2\sqrt{3NT}dL_0}{\sqrt{1-\beta}}+\frac{\sqrt{3}d}{\sqrt{1-\beta}\delta}\Theta_T+\sqrt{G}.
\end{align}
where $G=\frac{2-\beta}{1-\beta}\sum_{i=1}^{N}\mathbb{E}[\|\widetilde{g}_0^i(x_0^i)\|^2]$. Combining (\ref{12}) and (\ref{13}), we can give the bound of $\sum_{t=0}^{T-1}\sum_{i=1}^{N}\|x_{t+1}^i-x_t^i\|^2$ as
\begin{align}\label{21}
\nonumber&\sum_{t=0}^{T-1}\sum_{i=1}^{N}\mathbb{E}[\|x_{t+1}^i-x_t^i\|^2]\\
\nonumber\leq&\frac{12N^3r_u^2}{1-\gamma^2}+\alpha\eta^2\sum_{t=0}^{T-1}\sum_{i=1}^{N}\mathbb{E}[\|\widetilde{g}_t^i(x_t^i)\|^2]\\
\nonumber\leq&\frac{12N^3r_u^2}{1-\gamma^2}+\frac{36d^2L_0^2N^3r_u^2\alpha\eta^2}{(1-\gamma^2)(1-\beta)\delta^2}+\frac{12d^2L_0^2NT\alpha\eta^2}{1-\beta}\\
&+\frac{3d^2\alpha\eta^2}{(1-\beta)\delta^2}\Theta_T^2+\alpha\eta^2G.
\end{align}
Similar to the proof of Theorem 1, we substitute (\ref{20}) and (\ref{21}) into (\ref{19}) and it yields that
\begin{align}\label{22}
\nonumber R_{g,\delta}^T=&\sum_{t=0}^{T-1}\sum_{i=1}^{N}\mathbb{E}[\|\nabla f_{\delta,t}^i(x_{t}^i)\|^{2}]\\
\nonumber\leq&\frac{1}{\eta}\sum_{t=0}^{T-1}\sum_{i=1}^{N}\big[f_{\delta,t}^i(x_t^i)+f_{\delta,t+1}^i(x_{t+1}^i)-f_{\delta,t+1}^i(x_{t+1}^i)-f_{\delta,t}^i(x_{t+1}^i)\big]\\
\nonumber&+2(\frac{r_u}{\eta}+B)\sum_{t=0}^{T-1}\sum_{i=1}^{N}\mathbb{E}[\|\widetilde{g}_t^i(x_t^i)\|]+\frac{dL_{0}}{2\delta\eta}\mathbb{E}\big[\sum_{t=0}^{T-1}\sum_{i=1}^{N}\|x_{t+1}^i-x_t^i\|^2\big]\\
\nonumber\leq&\frac{1}{\eta}\sum_{i=1}^{N}\big[f_{\delta,0}^i(x_0^i)-f_{\delta,T}^i(x_{T}^i)\big]+\frac{1}{\eta}\Theta_T+\frac{dL_{0}}{2\delta}\frac{24N^3r_u^2}{1-\gamma^2}\\
\nonumber&+\frac{dL_{0}}{2\delta}\frac{72d^2L_0^2N^3r_u^2\alpha\eta}{(1-\gamma^2)(1-\beta)\delta^2}+\frac{dL_{0}}{2\delta}\frac{12d^2L_0^2NT\alpha\eta}{1-\beta}\\
&+\frac{dL_{0}}{2\delta}\frac{3d^2\alpha\eta}{(1-\beta)\delta^2}\Theta_T^2+\frac{dL_{0}\alpha\eta G}{2\delta}+2(\frac{r_u}{\eta}+B)\sum_{t=0}^{T-1}\sum_{i=1}^{N}\mathbb{E}[\|\widetilde{g}_t^i(x_t^i)\|].
\end{align}
Since $2(\frac{r_u}{\eta}+B)\sum_{t=0}^{T-1}\sum_{i=1}^{N}\mathbb{E}[\|\widetilde{g}_t^i(x_t^i)\|=\mathcal{O}(\max\{\frac{\Theta_T}{\eta},\frac{\sqrt{T}}{\eta}\})$, the regret bound of Algorithm 2 is bounded as 
\begin{align*}
R_{g,\delta}^T\leq\mathcal{O}\Big(\max\big\{\frac{\Theta_T}{\eta},\frac{\sqrt{T}}{\eta},\eta T,\eta\Theta_T^2\big\}\Big).
\end{align*}
Through simple computation, the regret of our algorithm can achieve $O(T^{\frac{3}{4}})$ when $\eta=\frac{1}{\sqrt{3\alpha}dL_0T^{\frac{1}{4}}}$ and the increasing rate of $\Theta_T$ is no more than $O(\sqrt{T})$. In addition, to satisfy that $|f_t^i(x)-f_{\delta,t}^i(x)|\leq\epsilon_f$, we choose $\delta=\frac{\epsilon_f}{L_{0}}$ and the proof is complete.
\end{proof}

Next, we consider the distributed non-convex online optimization problem when the objective functions are smoothed and define the regret $$R_g^T=\sum_{t=0}^{T-1}\sum_{i=1}^{N}\mathbb{E}[\|\nabla f_t^i(x_t)\|^2].$$ Similar to the above result, we give the following theorem.

\begin{theorem}
Suppose Assumption 1 and 2 hold. If $f_{t}^i\in C^{1,1}$ is $L_0$-Lipschitz and smoothed with smoothness constant $L_1$ for all $t$ and $i$, run Algorithm 2 with $\eta=\frac{1}{\sqrt{3\alpha}dL_0T^{\frac{1}{4}}}$ and $\delta=\frac{d}{T^{\frac{1}{8}}}$. The regret bound satisfies
$$R_{g}^T\leq\mathcal{O}\Big(\max\big\{T^{\frac{3}{4}},T^{\frac{1}{4}}\Theta_T,T^{\frac{3}{8}}\Theta_T,\Theta_T^2\big\}\Big).$$
\end{theorem}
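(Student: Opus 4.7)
The plan is to reuse almost verbatim the machinery already set up in the proof of Theorem 3, with two modifications that exploit the stronger $C^{1,1}$ hypothesis. First, I would reduce the true-gradient regret $R_g^T$ to the smoothed-gradient regret $R_{g,\delta}^T$ by the elementary inequality
$$\|\nabla f_t^i(x_t^i)\|^2 \leq 2\|\nabla f_{\delta,t}^i(x_t^i)\|^2 + 2\|\nabla f_{\delta,t}^i(x_t^i) - \nabla f_t^i(x_t^i)\|^2$$
combined with Lemma 2's estimate $\|\nabla f_{\delta,t}^i(x)-\nabla f_t^i(x)\|\leq \delta L_1 d$. Summing over $t$ and $i$ gives
$$R_g^T \leq 2 R_{g,\delta}^T + 2 N T \delta^2 L_1^2 d^2,$$
and under $\delta=d/T^{1/4}$, wait $\delta = d/T^{1/8}$, the extra term is $\mathcal{O}(T^{3/4})$, which will be dominated by the bounds derived below.

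Second, I would re-derive the analogue of (19)--(22) for $R_{g,\delta}^T$. The crucial observation is that when $f_t^i\in C^{1,1}$ with constant $L_1$, the smoothed function $f_{\delta,t}^i(x)=\mathbb{E}_{u}[f_t^i(x+\delta u)]$ is itself $L_1$-smooth, so in the descent inequality (17) one may replace the constant $L_\delta = dL_0/\delta$ used in Theorem 3 by the $\delta$-independent constant $L_1$. This substitution is what prevents the dominant $L_\delta\eta T$ contribution from inflating by the factor $1/\delta\sim T^{1/8}$ that would otherwise appear in (22). Every other estimate, notably the bounds (13), (20), (21) on $\sum\mathbb{E}\|\widetilde g_t^i(x_t^i)\|^2$, $\sum\mathbb{E}\|\widetilde g_t^i(x_t^i)\|$, and $\sum\mathbb{E}\|x_{t+1}^i-x_t^i\|^2$, carries over unchanged since these depend on Lemma 5 and the projection argument, which are insensitive to the smoothness/Lipschitz distinction.

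Once the modified (22) is in place, the rest is bookkeeping. Substituting $\eta=1/(\sqrt{3\alpha}dL_0 T^{1/4})$ and $\delta=d/T^{1/8}$, the main contributions separate as follows: (i) the telescoped function-value/variation piece $\Theta_T/\eta\sim T^{1/4}\Theta_T$; (ii) the cross-term $(r_u/\eta+B)\sum\mathbb{E}\|\widetilde g_t^i(x_t^i)\|$, which via (20) and $B=\mathcal{O}(1/\delta)$ yields $T^{3/4}$ together with $T^{3/8}\Theta_T$ (through the $\Theta_T/\delta$ piece); (iii) the descent term $(L_1/2\eta)\sum\mathbb{E}\|x_{t+1}^i-x_t^i\|^2$, whose dominant contributions via (21) are $L_1\eta T\sim T^{3/4}$ and $(L_1\eta/\delta^2)\Theta_T^2\sim \Theta_T^2$; and (iv) the approximation overhead $2NT\delta^2 L_1^2 d^2 \sim T^{3/4}$ from the reduction step. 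Taking the maximum gives the claimed bound.

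The main obstacle is tracking the $\delta$- and $\eta$-powers through the estimates (20) and (21) when $\delta$ is no longer a fixed constant but scales as $T^{-1/8}$. One must verify in particular that $\beta=3\alpha d^2 L_0^2\eta^2/\delta^2<1$ remains valid for sufficiently large $T$ (which holds since $\eta^2/\delta^2\sim T^{-1/4}\to 0$, so Lemma 5 and the geometric-sum trick used to obtain (13) still apply), and that the assumption $\Theta_{T,\delta}=\mathcal{O}(\Theta_T)$ in the preamble remains compatible with a $T$-dependent $\delta$. The substitution $L_\delta\to L_1$ is the key step: without it the $L_\delta\eta T$ term alone would be $\Theta(T^{7/8})$, strictly worse than the claimed $T^{3/4}$, so one must explicitly point out that the $C^{1,1}$ hypothesis is used precisely here.
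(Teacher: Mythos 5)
Your proposal is correct and follows essentially the same route as the paper: the reduction $R_g^T\leq 2R_{g,\delta}^T+2NT\delta^2L_1^2d^2$ is exactly the paper's inequality (24), and the key substitution of $L_\delta=dL_0/\delta$ by $L_1$ in the descent inequality is precisely how the paper obtains (23) from the Theorem~3 machinery. Your additional checks (that $\beta<1$ survives the $T$-dependent $\delta$, and the term-by-term bookkeeping giving $T^{3/4}$, $T^{1/4}\Theta_T$, $T^{3/8}\Theta_T$, $\Theta_T^2$) are consistent with, and somewhat more explicit than, the paper's own presentation.
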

\begin{proof}
Since the proof is similar to the proof of Theorem 3, we just simply replace $L_{\delta}$ with $L_1$ in (\ref{21}). So we have that

\begin{align}\label{23}
\nonumber &\sum_{t=0}^{T-1}\sum_{i=1}^{N}\mathbb{E}[\|\nabla f_{\delta,t}^i(x_{t}^i)\|^{2}]\\
\nonumber\leq&\frac{1}{\eta}\sum_{i=1}^{N}\big[f_{\delta,0}^i(x_0^i)-f_{\delta,T}^i(x_{T}^i)\big]+\frac{1}{\eta}\Theta_T+\frac{24L_1N^3r_u^2}{2(1-\gamma^2)}\\
\nonumber&+\frac{36d^2L_1L_0^2N^3r_u^2\alpha\eta}{(1-\gamma^2)(1-\beta)\delta^2}+\frac{6d^2L_1L_0^2NT\alpha\eta}{1-\beta}\\
&+\frac{3d^2L_1\alpha\eta}{2(1-\beta)\delta^2}\Theta_T^2+\frac{\alpha\eta L_1G}{2}+2(\frac{r_u}{\eta}+B)\sum_{t=0}^{T-1}\sum_{i=1}^{N}\mathbb{E}[\|\widetilde{g}_t^i(x_t^i)\|].
\end{align}

Since $f_t^i\in C^{1,1}$, we know that $\|\nabla f_{\delta,t}^i(x)-\nabla f_t^i(x)\|\leq dL_1\delta$ ac     cording to Lemma 2. Furthermore, we can obtain that
\begin{align}\label{24}
&\sum_{t=0}^{T-1}\sum_{i=1}^{N}\mathbb{E}[\|\nabla f_t^i(x_t^i)\|^2]\nonumber\\
=&\sum_{t=0}^{T-1}\sum_{i=1}^{N}\mathbb{E}[\|\nabla f_t^i(x_t^i)-\nabla f_{\delta,t}^i(x_t^i)+\nabla f_{\delta,t}^i(x_t^i)\|^2]\nonumber\\
\leq&\ 2\sum_{t=0}^{T-1}\sum_{i=1}^{N}\mathbb{E}[\|\nabla f_t^i(x_{t}^i)-\nabla f_{\delta,t}^i(x_{t}^i)\|^{2}]\nonumber\\
&+2\sum_{t=0}^{T-1}\sum_{i=1}^{N}\mathbb{E}[\|\nabla f_{\delta,t}^i(x_{t}^i)\|^{2}]\nonumber\\
\leq&\ 2d^2L_1^2\delta^2 NT+2\sum_{t=0}^{T-1}\sum_{i=1}^{N}\mathbb{E}[\|\nabla f_{\delta,t}^i(x_{t}^i)\|^{2}].\end{align}
Then we set $\eta=\frac{1}{\sqrt{3\alpha}dL_0T^{\frac{1}{4}}}$ and $\delta=\frac{d}{T^{\frac{1}{8}}}$ to satisfies $\beta\leq1$. Combining (\ref{20}) (\ref{23}) and (\ref{24}), we can conclude that
$$\begin{aligned}
R_{g}^T\leq\mathcal{O}\Big(\max\big\{T^{\frac{3}{4}},T^{\frac{1}{4}}\Theta_T,T^{\frac{3}{8}}\Theta_T,\Theta_T^2\big\}\Big).\end{aligned}$$
When increasing rate of $\Theta_T$ is no more than $O(T^{\frac{3}{8}})$, our algorithm can achieve $O(T^{\frac{3}{4}})$ regret bound.
\end{proof}

\section{Conclusion}
In this paper, we extend the ORF estimator to the distributed online optimization problems. It is assumed that the communication graph is undirected and adjacency matrix is double-stochastic. Furthermore, we consider a sequence of non-stationary objective functions, where the decision of each agent at time $t$ is influenced by the decisions of others. Then, we design two algorithms based on  one-point residual feedback estimator for convex and non-convex optimization problems and analyze the regret performance. It is shown that the algorithms can achieve a sublinear regret bound. Both theoretical analysis and numerical examples substantiate that the ORF estimator exhibits a lower regret bound and variance compared to traditional one-point estimators, thereby enhancing the convergence rate.
\section{Acknowledgement}
This work is supported by the National Natural Science Foundation of China (62473009).

\bibliographystyle{elsarticle-num} 
\bibliography{reference.bib}





\end{document}